\documentclass[10pt]{article}
\font\smallit=cmti10
\font\smalltt=cmtt10

\usepackage{caption}
\usepackage{color}
\usepackage{amssymb,latexsym,amsmath,epsfig,amsthm} 
\usepackage{empheq}
\usepackage{here}
\usepackage{url}
\usepackage{array}
\usepackage{hhline}
\newcolumntype{!}{!{\vrule width 1.5pt}}
\usepackage{ascmac}
\makeatletter
\usepackage{here}
\usepackage{comment}

\renewcommand\section{\@startsection {section}{1}{\z@}
{-30pt \@plus -1ex \@minus -.2ex}
{2.3ex \@plus.2ex}
{\normalfont\normalsize\bfseries\boldmath}}

\renewcommand\subsection{\@startsection{subsection}{2}{\z@}
{-3.25ex\@plus -1ex \@minus -.2ex}
{1.5ex \@plus .2ex}
{\normalfont\normalsize\bfseries\boldmath}}

\renewcommand{\@seccntformat}[1]{\csname the#1\endcsname. }

\makeatother
\newtheorem{theorem}{Theorem}
\newtheorem{lemma}{Lemma}
\newtheorem{conjecture}{Conjecture}

\theoremstyle{definition}
\newtheorem{definition}{Definition}

\newtheorem{example}{Example}

\begin{document}

\begin{center}
\uppercase{\bf A Subtraction Nim with a Pass}
\vskip 20pt
{\bf Urban Larsson}\\
{\smallit IEOR, IIT Bombay, India}\\
{\tt larsson@iitb.ac.in}
\vskip 10pt
{\bf Hikaru Manabe}\\
{\smallit University of Tuskuba, Tsukuba City, Japan}\\
{\tt urakihebanam@gmail.com}
\vskip 10pt
{\bf Ryohei Miyadera }\\
{\smallit Keimei Gakuin Junior and High School, Kobe City, Japan}\\
{\tt runnerskg@gmail.com}

\end{center}


\centerline{\bf Abstract}
\noindent
We consider a subtraction Nim with subtraction set $\{s_1,s_2,s_3\}=\{2,4n,4n+2\}$, where 
$n$ is a positive integer such that $n \geq 3$. We do not treat the case that $n=1$ or $n=2$ in this article.
We show that this game satisfies the reverse-mex property of Grundy numbers, i.e., $\mathcal{G}(x)=\operatorname{mex}\{\mathcal{G}(x+s_1), \mathcal{G}(x+s_2), \mathcal{G}(x+s_3)\}$, where the mex is taken over successors rather than predecessors.
We modify the rule of this subtraction Nim to allow a one-time pass, that is, a passing move usable at most once during the game, unavailable from terminal positions; once used by either player, it becomes unavailable.
In classical Nim, the introduction of a pass move complicates the game, and finding a formula that describes the set of P-positions in traditional three-pile Nim with a pass remains an important open question.
In the case of subtraction Nim with a pass, however, the introduction of a pass move does not complicate the game.
We prove that this game still satisfies the reverse-mex property of Grundy numbers when a pass move is available.
 \pagestyle{myheadings} 
 \markright{\smalltt \hfill} 
 \thispagestyle{empty} 
 \baselineskip=12.875pt 
 \vskip 30pt

\section{Introduction}
Subtraction games have a long history of combinatorial game research, 
with early results by Golomb \cite{golomb} and Winning Ways \cite{winningway1}, 
but new facts have still been discovered. 
See \cite{larsson3}, \cite{larsson1}, and \cite{larssonBhagat}.

We focus on the following particular game for the reasons described below.

\begin{definition}\label{defofgame}
There is a pile of stones, and the number of stones to be removed belongs 
to the subtraction set $\{s_1, s_2, s_3\}=\{2,4n,4n+2\}$, 
where $n$ is a positive integer with $n \geq 3$. 
The player who cannot move is the loser. 
\end{definition}

We focus on this particular game in Definition \ref{defofgame} for the following two reasons.
The first reason is that this game exemplifies the reverse-mex property of Grundy numbers, 
which is a remarkable property shared by some subtraction Nim games.
The second reason is that the proofs will be extremely complicated if we attempt to prove 
the results in this paper for subtraction games in greater generality.
Subtraction Nims are very sensitive to changes in conditions; 
even for the simple case $\{s_1,s_2,s_3\}=\{2,4n,4n+2\}$, 
the two different assumptions $n \geq 3$ and $n=1,2$ produce different formulas.

The proof method in this paper is elementary in nature, relying on case analysis and induction. 
Nevertheless, shortening the proof by more sophisticated methods appears unlikely, 
since the sequence of Grundy numbers $\mathcal{G}(x)$ for $x=0,1,\dots$ 
enters a loop only after taking irregular values for $x=0,1,\dots, 12n+8$.

The rest of this paper is organized as follows. 
In Section \ref{subtractionnim}, we analyze the Grundy numbers of the subtraction Nim 
defined in Definition~\ref{defofgame} and prove the reverse-mex property. 
In Section \ref{subtractionnimpass}, we introduce a one-time pass move into the game and prove that 
the reverse-mex property is preserved. 
In Section \ref{conject}, we present conjectures on reverse-mex properties 
of subtraction Nim and subtraction Nim with a pass.

Let $\mathbb{Z}_{\ge 0}$ and $\mathbb{N}$ represent the sets of non-negative integers and natural numbers, respectively.

\begin{definition}\label{NPpositions}
	$(a)$ A position is referred to as a $\mathcal{P}$-\textit{position} if it is the winning position for the previous player (the player who has just moved), as long as they play correctly at each stage. \\
	$(b)$ A position is referred to as an $\mathcal{N}$-\textit{position} if it is the winning position for the next player, as long as they play correctly at each stage.
\end{definition}

\begin{definition}\label{defofmove}
	For any position $\mathbf{p}$ in game $\mathbf{G}$, a set of positions can be reached by a single move in $\mathbf{G}$, which we denote as \textit{move}$(\mathbf{p})$. 
\end{definition}

\begin{definition}\label{defofmexgrundy}
$\mathrm{(i)}$ The \textit{minimum excluded value} ($\textit{mex}$) of a set $S$ of nonnegative integers is the least nonnegative integer that is not in S. \\
$\mathrm{(ii)}$ Let $\mathbf{p}$ be a position in the impartial game. The associated \textit{Grundy number} is denoted by $\mathcal{G}(\mathbf{p})$ and is 
 recursively defined by 
	$\mathcal{G}(\mathbf{p}) = \textit{mex}(\{\mathcal{G}(\mathbf{h}):  \mathbf{h} \in move(\mathbf{p})\}).$
\end{definition}

\begin{theorem}[\cite{lesson}]\label{theoremofsumg}
For any position $\mathbf{g}$ in $\mathbf{G}$, 
	$G_{\mathbf{G}}(\mathbf{g})=0$ if and only if $\mathbf{g}$ is the $\mathcal{P}$-position. 
\end{theorem}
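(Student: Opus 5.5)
We prove the two implications separately, by well-founded induction on positions. The game $\mathbf{G}$ is finite: every play ends after finitely many moves. That holds in the subtraction game because each move strictly decreases the pile. In the pass variant it holds because the pass can be used at most once. Hence the relation ``$\mathbf{h}\in move(\mathbf{p})$'' is well-founded, and we may assume the statement for all positions reachable from $\mathbf{g}$ while proving it for $\mathbf{g}$. The base case is a terminal position $\mathbf{g}$. There $move(\mathbf{g})=\emptyset$, so $G_{\mathbf{G}}(\mathbf{g})=\textit{mex}(\emptyset)=0$. The next player cannot move and loses, so $\mathbf{g}$ is a $\mathcal{P}$-position, as Definition~\ref{NPpositions} requires.

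For the inductive step, first suppose $G_{\mathbf{G}}(\mathbf{g})=0$. By Definition~\ref{defofmexgrundy}, $0$ is then not among the values $G_{\mathbf{G}}(\mathbf{h})$, so every $\mathbf{h}\in move(\mathbf{g})$ has $G_{\mathbf{G}}(\mathbf{h})\neq 0$. By the induction hypothesis each such $\mathbf{h}$ is not a $\mathcal{P}$-position. Since every position in a finite impartial game is exactly one of $\mathcal{P}$ or $\mathcal{N}$ (a fact we establish in the same induction), each $\mathbf{h}$ is an $\mathcal{N}$-position. So whatever the next player does from $\mathbf{g}$, the previous player becomes the next player at a winning position. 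The previous player therefore wins, and $\mathbf{g}$ is a $\mathcal{P}$-position.

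Conversely, suppose $G_{\mathbf{G}}(\mathbf{g})=m>0$. By the definition of mex, some $\mathbf{h}\in move(\mathbf{g})$ has $G_{\mathbf{G}}(\mathbf{h})=0$. By the induction hypothesis this $\mathbf{h}$ is a $\mathcal{P}$-position, so the next player moves to $\mathbf{h}$ and wins. Thus $\mathbf{g}$ is an $\mathcal{N}$-position, hence not a $\mathcal{P}$-position.

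The only delicate point is the dichotomy that every position is exactly one of $\mathcal{P}$ or $\mathcal{N}$. We settle it inside the same induction: a position is $\mathcal{N}$ if some option is $\mathcal{P}$, and $\mathcal{P}$ otherwise. By finiteness this classification agrees with the strategic definitions. For the pass variant we also check that the state records whether the pass has been used, so that positions are well defined and the game remains finite.
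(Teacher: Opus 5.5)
Your argument is correct. The paper gives no proof of its own and simply quotes this standard fact from \cite{lesson}; your well-founded induction (value $0$ means every option has nonzero value and is therefore an $\mathcal{N}$-position, while nonzero value means some option has value $0$ and is therefore a $\mathcal{P}$-position), with finiteness of play in the pass variant correctly flagged as what justifies both the induction and the $\mathcal{P}$/$\mathcal{N}$ dichotomy, is exactly the standard textbook proof.
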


For the details of combinatorial game theory, see \cite{lesson} and \cite{combysiegel}.
In the following sections, we consider subtraction Nim and subtraction Nim with a pass.
For the research on subtraction Nim, see \cite{winningway1}, and for the research on traditional Nim with a pass, see \cite{nimpass2} and \cite{nimpassfinite}.
For the research on combinatorial games with a pass, see \cite{regularity},
\cite{integers1}, \cite{integer2025}, and \cite{integer2023}.
\section{Subtraction Game}\label{subtractionnim}

We define the subtraction Nim that we consider in this paper.
\begin{definition}\label{defofmove}
There is a pile of stones, and the number of stones to be removed belongs to the subtraction set $\{s_1, s_2, s_3\}=\{2,4n,4n+2\}$ such that $n \geq 3$.
The player who cannot move is the loser. Then,
$move(x)=\{x-s_i:x-s_i \geq 0 \text{ and } i = 1,2,3\}$.
\end{definition}

\begin{definition}\label{defofgrundy}
(i) When the number of stones in the pile is $x$, 
we denote by $(x)$ the position of the game.\\
(ii) We denote by $\mathcal{G}(x)$ the Grundy number of the position $(x)$.
\end{definition}

\begin{definition}\label{defofgrundy}
(i) When the number of stones in the pile is $x$, 
we denote by $(x)$ the position of the game.\\
(ii) We denote by $\mathcal{G}(x)$ the Grundy number of the position $(x)$.
\end{definition}

By Definitions \ref{defofmexgrundy}, \ref{defofgrundy} and \ref{defofmove}, we have
$\mathcal{G}(x) = mex(\{\mathcal{G}(x-s_1), \mathcal{G}(x-s_2), \mathcal{G}(x-s_3)\})$. 
We define the reverse-mex property in Definition \ref{inversemex}.

\begin{definition}\label{inversemex}
A subtraction Nim is said to satisfy \textit{reverse-mex} property if 
$\mathcal{G}(x) = mex(\{\mathcal{G}(x+s_1), \mathcal{G}(x+s_2), \mathcal{G}(x+s_3)\})$. 
\end{definition}

In Figure \ref{wall4m5}, we present the list $\{\mathcal{G}(x):x \in \mathbb{Z}_{\ge 0}\}$.
\begin{figure}[H]
\begin{minipage}[t]{1\textwidth}
\begin{center}
\includegraphics[height=3.4cm]{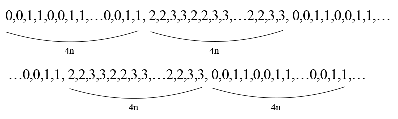}
\caption{ $\{\mathcal{G}(0), \mathcal{G}(1), \mathcal{G}(2), \dots \}$}
\label{wall4m5}
\end{center}
\end{minipage}
\end{figure}

\begin{theorem}\label{lemmafor4mcase}
Let $ n \in \mathbb{N}$ and $p=8n$.
Let $\mathcal{G}(x)$ be the Grundy number of the subtraction Nim with the subtraction set $\{s_1,s_2,s_3\}$ $= \{2, 4n, 4n+2\}$, where $x$ is the number of stones in the pile.
For $k,m \in \mathbb{Z}_{\ge 0}$ such that $m \leq n-1$,
we have the following: \\
(i) for $t=0,1$,
\begin{equation}
\mathcal{G}(pk+4m+t)=0;  \label{case0}
\end{equation}
(ii) for $t=2,3$,
\begin{equation}
\mathcal{G}(pk+4m+t)=1;   \label{case1}
\end{equation}
(iii) for  $t=0,1$,
\begin{equation}
\mathcal{G}(pk+4n+4m+t)=2;  \label{case2}
\end{equation}
(iv) for  $t=2,3$,
\begin{equation}
\mathcal{G}(pk+4n+4m+t)=3.  \label{case3}
\end{equation}
\end{theorem}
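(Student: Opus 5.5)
The plan is to prove all four formulas at once by strong induction on $x=pk+r$ with $0\le r<8n$. The statement is equivalent to saying that $\mathcal{G}(x)$ depends only on the residue $r=x \bmod 8n$, and equals
\[
f(r)=\begin{cases} 0 & r=4m+t,\ t\in\{0,1\},\\ 1 & r=4m+t,\ t\in\{2,3\},\\ 2 & r=4n+4m+t,\ t\in\{0,1\},\\ 3 & r=4n+4m+t,\ t\in\{2,3\},\end{cases}\qquad 0\le m\le n-1 .
\]
Since $\mathcal{G}(x)=\operatorname{mex}\{\mathcal{G}(x-2),\mathcal{G}(x-4n),\mathcal{G}(x-4n-2)\}$, where only nonnegative arguments are kept, it suffices to assume the formula for all $y<x$. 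We then read off the residues of $x-2$, $x-4n$ and $x-4n-2$. The key point is that subtracting $4n$ moves us from one half of the period to the other. Subtracting $2$ moves within a block of four, except at the block boundary $m=0$, where we wrap into the previous block or the previous period.

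The case analysis runs as follows, writing $A$ for the first half of the period and $B$ for the second half.

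(i) For $r=4m+t$ with $t\in\{0,1\}$, the option $x-4n$ has residue $4n+4m+t$, so its value is $2$. If $m\ge1$, then $x-2$ has value $1$ and $x-4n-2$ has residue $4n+4(m-1)+t+2$, so its value is $3$. If $m=0$, the option $x-2$ wraps to residue $8n-2+t$, with value $3$, and $x-4n-2$ has residue $4(n-1)+2+t$, with value $1$. In every case the options lie in $\{1,2,3\}$, some of them possibly absent when $k=0$. Hence the mex is $0$.

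(ii) For $r=4m+t$ with $t\in\{2,3\}$, the option $x-2$ always exists and has value $0$. When $k\ge1$, the other two options have values $3$ and $2$. So the option set is either $\{0\}$ or $\{0,2,3\}$, and the mex is $1$.

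(iii) For $r=4n+4m+t$ with $t\in\{0,1\}$, the option $x-4n$ has residue $4m+t$, so its value is $0$. The option $x-2$ has value $3$ if $m\ge1$ and value $1$ if $m=0$. The option $x-4n-2$ has value $1$ if $m\ge1$, and if $m=0$ it has value $3$ or is absent when $k=0$. So the option set is $\{0,1\}$ or $\{0,1,3\}$, and the mex is $2$.

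(iv) For $r=4n+4m+t$ with $t\in\{2,3\}$, all three options exist, with values $2$, $1$ and $0$ respectively. Hence the mex is $3$.

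The only delicate step is the bookkeeping at the boundaries: $m=0$, where subtracting $2$ crosses into the previous block or period, and $k=0$, where some options are negative. I would handle this by listing residues explicitly, as above, and checking that a missing option never removes a value smaller than the claimed mex. This holds because in (ii) and (iii) the options $x-2$ and $x-4n$, which supply the values $0$ and $1$, always exist there. The argument in fact uses nothing beyond $n\ge1$.
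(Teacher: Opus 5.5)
Your proof is correct and takes essentially the same approach as the paper: strong induction on $x$, reading the values of $x-2$, $x-4n$, $x-4n-2$ off their residues modulo $8n$, with the wrap-around at $m=0$ and the missing options at $k=0$ as the only boundary cases. The one difference is organizational: the paper computes the first period $0\le x<8n$ directly as a base case and then inducts on $k$, whereas you fold $k=0$ into the general step by checking that an absent option never removes a value below the claimed mex.
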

\begin{proof} 
We refer to Figure \ref{wall4m5} for the concrete examples of the following proof process.
Since $move(0)=move(1)=\emptyset,$ $\mathcal{G}(0)=\mathcal{G}(1)=0$.
Since $move(2)=\{2-s_1\}=\{0\}$ and $move(3)=\{3-s_1\}=\{1\}$, we obtain $\mathcal{G}(2)=\mathcal{G}(3)=1$.

For $m$ such that $1 \leq m \leq n-1$, 
$move(4m+t)=\{4m+t-s_1\}=\{4m+t-2\}$, and hence 
by using the definition of Grundy number repeatedly, we obtain 
$\mathcal{G}(4m+t)=0$ for $t=0,1$ and $\mathcal{G}(4m+t)=1$ for $t=2,3$.

$move(4n)= \{4n-s_1,4n-s_2\}=\{4n-2,0\}$, $\mathcal{G}(4n-2)=1$ and $\mathcal{G}(0)=0$. Hence $\mathcal{G}(4n)=2$.
Similarly, we obtain $\mathcal{G}(4n+1)=2$.
$move(4n+2)=\{4n+2-s_1,4n+2-s_2, 4n+2-s_3\}=\{4n,2,0\}$, $\mathcal{G}(4n)=2$, $\mathcal{G}(2)=1$, and $\mathcal{G}(0)=0$. Hence  $\mathcal{G}(4n+2)=3$. Similarly,  we obtain $\mathcal{G}(4n+3)=3$.

Suppose that $1 \leq m \leq n-1$. Then,  $4n+4+t \leq 4n+4m+t \leq 8n-4+t$ for $t=0,1,2,3$. We use mathematical induction.

If $t=0,1$, $move(4n+4m+t)=\{4n+4(m-1)+t+2, 4m+t, 4(m-1)+t+2\}$. 
By the assumption of mathematical induction, $\mathcal{G}(4n+4(m-1)+t+2)=3$, $\mathcal{G}(4m+t)=0$, and $\mathcal{G}(4(m-1)+t+2)=1$. Hence, $\mathcal{G}(4n+4m+t)=2$.

If $t=2,3$, $move(4n+4m+t)=\{4n+4m+t-2, 4m+t, 4m+t-2\}$. 
By the assumption of mathematical induction,  $\mathcal{G}(4n+4m+t-2)=2$, $\mathcal{G}(4m+t)=1$, and $\mathcal{G}(4m+t-2)=0$. Hence, $\mathcal{G}(4n+4m+t)=3$.

Let $h \in \mathbb{N}$. We use mathematical induction,and assume that Equations (\ref{case0}), (\ref{case1}), (\ref{case2}), and (\ref{case3}) are valid for 
any $k$ such that $k \leq h$.

For $t=0,1$
$move(ph+t)=\{p(h-1)+4n+4(n-1)+t+2, p(h-1)+4n+t, p(h-1)+4(n-1)+t+2\}$.
Since   $\mathcal{G}(p(h-1)+4n+4(n-1)+t+2)=3$,  $\mathcal{G}(p(h-1)+4n+t)=2$, and $\mathcal{G}(p(h-1)+4(n-1)+t+2)=1$, we obtain $\mathcal{G}(ph+t)=0$.

For $t=2,3$,
$move(ph+t)=\{ph+t-2, p(h-1)+4n+t, p(h-1)+4n+t-2\}$.
Since $\mathcal{G}(ph+t-2)=0$, $\mathcal{G}(p(h-1)+4n+t)=3$, and $\mathcal{G}(p(h-1)+4n+t-2)=2$
, we obtain  $\mathcal{G}(pt+t)=1$.

 Suppose that 
$1 \leq m \leq n-1$. 

For $t=0,1$,
$move(ph+4m+t)=\{ph+4(m-1)+t+2, p(h-1)+4n+4m+t, p(h-1)+4n+4(m-1)+t+2\}$.
Since $\mathcal{G}(ph+4(m-1)+t+2)=1$, $\mathcal{G}(p(h-1)+4n+4m+t)=2$, and $\mathcal{G}(p(h-1)+4n+4(m-1)+t+2)=3$, we obtain $\mathcal{G}(ph+4m+t)=0$.

For $t=2,3$,
$move(ph+4m+t)=\{ph+4m+t-2, p(h-1)+4n+4m+t, p(h-1)+4n+4m+t-2 \}$.
Since $\mathcal{G}(ph+4m+t-2)=0$, $\mathcal{G}(p(h-1)+4n+4m+t)=3$, and $\mathcal{G}(p(h-1)+4n+4m+t-2)=2$,  
we obtain $\mathcal{G}(ph+4m+t)=1$.

For $t=0,1$,
$move(ph+4n+t)=\{ph+4(n-1)+t+2, ph+t, p(h-1)+4n+4(n-1)+t+2\}$.
Since $\mathcal{G}(ph+4(n-1)+t+2)=1$, $\mathcal{G}(ph+t)=0$, and 
$\mathcal{G}(p(h-1)+4n+4(n-1)+t+2)=3$, we obtain $\mathcal{G}(ph+4n+t)=2$.

For $t=2,3$,
$move(ph+4n+t)=\{ph+4n+t-2, ph+t, ph+t-2\}$.
Since $\mathcal{G}(ph+4n+t-2)=2$, $\mathcal{G}(ph+t)=1$, and 
$\mathcal{G}(ph+t-2)=0$, we obtain $\mathcal{G}(ph+4n+t)=3$.

For $t=0,1$,
$move(ph+4n+4m+t)=\{ph+4n+4(m-1)+t+2, ph+4m+t, ph+4(m-1)+t+2\}.$
Since $\mathcal{G}(ph+4n+4(m-1)+t+2)=3$, $\mathcal{G}(ph+4m+t)=0$, and 
$\mathcal{G}(ph+4(m-1)+t+2)=1$, we obtain $\mathcal{G}(ph+4n+4m+t)=2$.

For $t=2,3$,
$move(ph+4n+4m+t)=\{ph+4n+4m+t-2, ph+4m+t, ph+4m+t-2\}.$
Since $\mathcal{G}(ph+4n+4m+t-2)=2$, $\mathcal{G}(ph+4m+t)=1$, and 
$\mathcal{G}(ph+4m+t-2)=0$, we obtain $\mathcal{G}(ph+4n+4m+t)=3$.
\end{proof}

The subtraction Nim with the subtraction set $\{2,4n,4n+2\}$ has the reverse-$mex$ property 
 of Definition \ref{inversemex}, and this fact is 
presented in Theorem \ref{inversemexth}.
\begin{theorem}\label{inversemexth}
For any $x \in \mathbb{N}$,
\begin{equation}
\mathcal{G}(x)=mex(\{\mathcal{G}(x+2), \mathcal{G}(x+4n), \mathcal{G}(x+4n+2)\}).
\end{equation}
\end{theorem}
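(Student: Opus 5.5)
Theorem \ref{lemmafor4mcase} gives $\mathcal{G}$ completely: it is periodic with period $p=8n$. Within one period, write $x = pk + r$ with $0 \le r < p$. Then $\mathcal{G}(x)$ depends only on $r$ through two coordinates. The first is the block $b = \lfloor r/4n \rfloor \in \{0,1\}$. The second is the parity-type $u$, where $u=0$ if $r \bmod 4 \in\{0,1\}$ and $u=1$ if $r \bmod 4\in\{2,3\}$. In these terms $\mathcal{G}(x) = 2b+u$. So the plan is to verify the identity $\mathcal{G}(x)=\operatorname{mex}\{\mathcal{G}(x+2),\mathcal{G}(x+4n),\mathcal{G}(x+4n+2)\}$ by computing how the three shifts act on the pair $(b,u)$. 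No further induction is needed, since all values are already known for every $x\ge 0$ and the successors $x+s_i$ are all nonnegative.

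The key observation concerns the two shifts $+4n$ and $+2$.

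\textbf{Shift by $4n$.} Adding $4n$ preserves the residue mod $4$, so $u$ is unchanged. It also moves $r$ to $r+4n \pmod p$, which flips $b$. Hence $\mathcal{G}(x+4n)=2(1-b)+u$.

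\textbf{Shift by $2$.} Adding $2$ flips $u$. It changes $b$ only when crossing a block boundary, that is, when $r \in\{4n-2,4n-1\}$ or $r\in\{8n-2,8n-1\}$ (wrapping to the next period). These are exactly the residues with $u=1$ at the last quadruple of a block.

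\textbf{Shift by $4n+2$.} This is the composition of the two previous shifts.

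The verification then splits into four cases according to $(b,u)$, plus a boundary subcase for $u=1$. When $u=0$, $x+2$ stays in the same block. The successors have values $2b+1$, $2(1-b)$ and $2(1-b)+1$.
\begin{itemize}
\item For $b=0$: the set is $\{1,2,3\}$, with mex $0=\mathcal{G}(x)$.
\item For $b=1$: the set is $\{3,0,1\}$, with mex $2=\mathcal{G}(x)$.
\end{itemize}
When $u=1$ and $x+2$ stays in the block, the successors have values $2b$, $2(1-b)+1$ and $2(1-b)$.
\begin{itemize}
\item For $b=0$: the set is $\{0,3,2\}$, with mex $1$.
\item For $b=1$: the set is $\{2,1,0\}$, with mex $3$.
\end{itemize}
Both agree with $\mathcal{G}(x)$.

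The main obstacle is the boundary subcase, $u=1$ with $r\in\{4n-2,4n-1,8n-2,8n-1\}$. Here $x+2$ lands in the other block with $u=0$, so $\mathcal{G}(x+2)=2(1-b)$. Likewise $x+4n+2$ lands back in block $b$ with $u=0$, so $\mathcal{G}(x+4n+2)=2b$. Meanwhile $\mathcal{G}(x+4n)=2(1-b)+1$.
\begin{itemize}
\item For $b=0$: the set is $\{2,3,0\}$, with mex $1=\mathcal{G}(x)$.
\item For $b=1$: the set is $\{0,1,2\}$, with mex $3=\mathcal{G}(x)$.
\end{itemize}
So the boundary case also works. I would present these checks explicitly by listing, for each of the forms $pk+4m+t$ and $pk+4n+4m+t$ from Theorem \ref{lemmafor4mcase}, the three successor values, separating $m=n-1$, $t\in\{2,3\}$ as the wrap-around case. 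In that case the successor $pk+4(n-1)+t+2$ is rewritten as $pk+4n+(t-2)$, and $pk+4n+4(n-1)+t+2$ as $p(k+1)+(t-2)$.
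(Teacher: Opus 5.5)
Your proof is correct and follows the paper's own argument. The paper splits into the same four cases, $x=pk+4m+t$ and $x=pk+4n+4m+t$ with $t\in\{0,1\}$ or $t\in\{2,3\}$; these are exactly your four values of $(b,u)$. In each case it reads the three successor values off Theorem \ref{lemmafor4mcase}, and it separates the wrap-around subcase $m=n-1$, $t\in\{2,3\}$ just as you do. Your encoding $\mathcal{G}(x)=2b+u$ only compresses those four computations into two uniform formulas.
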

\begin{proof}
Suppose that $0 \leq m \leq n-1$ and $p \in \mathbb{Z}_{\ge 0}$. We have four cases.\\
\noindent {\tt Case 1}: Let $x=pk+4m+t$ with $t=0,1$. Then, $mex(\{\mathcal{G}(x+2), \mathcal{G}(x+4n), \mathcal{G}(x+4n+2)\})$
$=mex(\{\mathcal{G}(pk+4m+t+2), \mathcal{G}(pk+4n+4m+t),\mathcal{G}(pk+4n+4m+t+2)\})=mex(\{1,2,3\})$
$=0 = \mathcal{G}(x).$\\
\noindent {\tt Case 2}:
Let $x=pk+4m+t$ with $t=2,3$. Then, 
\begin{equation}
\mathcal{G}(x+2)=\mathcal{G}(pk+4m+t+2)=\mathcal{G}(pk+4(m+1)+t-2)=0 \label{grundy11}  
\end{equation}
 if $m \leq n-2$, and 
\begin{equation}
\mathcal{G}(x+2)=\mathcal{G}(pk+4m+t+2)=\mathcal{G}(pk+4n+t-2)=2 \label{grundy22}     
\end{equation}
if $m = n-1$.
\begin{equation}
\mathcal{G}(x+4n)=\mathcal{G}(pk+4n+4m+t)=3. \label{grundy33}     
\end{equation}
\begin{equation}
\mathcal{G}(x+n+2)=\mathcal{G}(pk+4n+4m+t+2)=\mathcal{G}(pk+4n+4(m+1)+t-2)=2 \label{grundy44}   
\end{equation}
if $m \leq n-2$, and 
\begin{equation}
\mathcal{G}(x+n+2)=\mathcal{G}(pk+4n+4m+t+2)=\mathcal{G}(p(k+1)+t-2)=0 \label{grundy55}     
\end{equation}
if $m = n-1$.
By Equations (\ref{grundy11}), (\ref{grundy22}), (\ref{grundy33}), (\ref{grundy44}), and (\ref{grundy55}),
$mex(\{\mathcal{G}(x+2), \mathcal{G}(x+4n), \mathcal{G}(x+4n+2)\})$
$=mex(\{\mathcal{G}(pk+4m+t+2), \mathcal{G}(pk+4n+4m+t),\mathcal{G}(pk+4n+4m+t+2)\})$
$=mex(\{0,3,2\})=1$ or $mex(\{2,3,0\})=1$, and $\mathcal{G}(x)=1$.\\
\noindent {\tt Case 3}: Let $x=pk+4n+4m+t$ with $t=0,1$. Then, 
$mex(\{\mathcal{G}(x+2), \mathcal{G}(x+4n), \mathcal{G}(x+4n+2)\})$
$=mex(\{\mathcal{G}(pk+4n+4m+t+2), \mathcal{G}(p(k+1)+4m+t),\mathcal{G}(p(k+1)+4m+t+2)\})=mex(\{3,0,1\})$
$=2 = \mathcal{G}(x).$ \\
\noindent {\tt Case 4}: Let $x=pk+4n+4m+t$ with $t=2,3$. Then, 
\begin{equation}
\mathcal{G}(x+2)=\mathcal{G}(pk+4n+4m+t+2)=\mathcal{G}(pk+4n+4(m+1)+t-2)=2 \label{grundy66}     
\end{equation}
if $m \leq n-2$, and 
\begin{equation}
\mathcal{G}(x+2)=\mathcal{G}(pk+4n+4m+t+2)=\mathcal{G}(p(k+1)+t-2)=0  \label{grundy77}     
\end{equation}
if $m = n-1$.
\begin{equation}
\mathcal{G}(x+4n)=\mathcal{G}(pk+8n+4m+t)=\mathcal{G}(p(k+1)+4m+t)=1.  \label{grundy88}     
\end{equation}
\begin{equation}
\mathcal{G}(x+4n+2)=\mathcal{G}(pk+8n+4m+t+2)=\mathcal{G}(p(k+1)+4(m+1)+t-2)=0 \label{grundy99} 
\end{equation}
if $m \leq n-2$, and 
\begin{equation}
\mathcal{G}(x+4n+2)=\mathcal{G}(pk+8n+4m+t+2)=\mathcal{G}(p(k+1)+4n+t-2)=2 \label{grundy1010}   
\end{equation}
if $m=n-1$.
By Equations (\ref{grundy66}), (\ref{grundy77}), (\ref{grundy88}), (\ref{grundy99}), and (\ref{grundy1010}),
$mex(\{\mathcal{G}(x+2), \mathcal{G}(x+4n), \mathcal{G}(x+4n+2)\})$
$=mex(\{\mathcal{G}(pk+4n+4m+t+2), \mathcal{G}(pk+8n+4m+t),\mathcal{G}(pk+8n+4m+t+2)\})$
$=mex(\{2,1,0\})=3$ or $mex(\{0,1,2\})=3$, and 
$\mathcal{G}(x)=3.$
\end{proof}

\section{Subtraction Nim with a Pass}\label{subtractionnimpass}
This paper presents the research on combinatorial games with a pass.
In the classical Nim, if the standard rules of the game are modified to allow a one-time pass, that is, a passing move that may be used at most once in the game and not from a terminal position, and once either player has used a pass, it is no longer available, the game becomes complicated. 
To find the formula that describes the set of previous players' winning positions of the traditional three-pile Nim with a pass remains an important open question. 

 \begin{definition}\label{subtractionsmallpass}
In the subtraction Nim in Definition \ref{defofmove}, we allow a pass-move. We denote the position of the game by $(x,p)$, where $x$ is the number of stones in the pile and $p=1$ if a pass is available, and $p=0$ if not.
\end{definition}

In this section, we show that the subtraction Nim in this paper displays remarkable properties when a pass is allowed.
By the definition of Grundy number, $\mathcal{G}(x,1)$ satisfies 
\begin{equation}
\mathcal{G}(x,1)=mex(\{\mathcal{G}(x-2,1), \mathcal{G}(x-4n,1), \mathcal{G}(x-4n-2,1),\mathcal{G}(x,0)\}),\label{calgrundy1}
\end{equation}
but for $x \geq 12n+9$, we obtain 
\begin{equation}
\mathcal{G}(x,1)=mex(\{\mathcal{G}(x-2,1), \mathcal{G}(x-4n,1), \mathcal{G}(x-4n-2,1)\})\label{calgrundy2}
\end{equation}
and 
\begin{equation}
\mathcal{G}(x,1)=mex(\{\mathcal{G}(x+2,1), \mathcal{G}(x+4n,1), \mathcal{G}(x+4n+2,1)\}).\label{calgrundy3}
\end{equation}
By Equations (\ref{calgrundy1}) and (\ref{calgrundy2}), for this subtraction Nim with a pass, the calculation of Grundy number $\mathcal{G}(x,1)$ does not need the value of $\mathcal{G}(x,0)$ when $x \geq 12n+9$.

By Equation (\ref{calgrundy3}), we have the reverse-$mex$ property of Definition \ref{inversemex}.
The subtraction Nim with the subtraction set $\{2,4n,4n+2\}$ has the reverse-$mex$ property 
 of Definition \ref{inversemex}.

for $x \geq 12n+9$.

\begin{table}[H] 
  \centering 
  \caption{Grundy Numbers for $x=0,1,\dots, 4n+3$}
  \label{tables4n3}
{
\setlength{\tabcolsep}{1.5pt} 
\begin{tabular}{|r @{\hspace{0.7pt} \vrule width 1.4pt \hspace{3pt}}r|r|r|r@{\hspace{0.7pt} \vrule width 1.4pt \hspace{3pt}}r|r|r|r@{\hspace{0.7pt} \vrule width 1.4pt \hspace{3pt}}c @{\hspace{0.7pt} \vrule width 1.4pt \hspace{3pt}} r |r|r|r @{\hspace{0.7pt} \vrule width 1.4pt \hspace{0.0pt}} r |r|r|r @{\hspace{0.4pt} \vrule width 1.4pt \hspace{0pt}}} \hline
    $x$ & 0 & 1 & 2 & 3 & 4 & 5 & 6 & 7 & \dots & $4n{-}4$ & $4n{-}3$ & $4n{-}2$ & $4n{-}1$ &$4n$ & $4n{+}1$ & $4n{+}2$ & $4n{+}3$ \\ \hline
    $\mathcal{G}(x,0)$ & 0 & 0 & 1 & 1 & 0 & 0 & 1 & 1 & \dots & 0 & 0 & 1 & 1 & 2 & 2 & 3 & 3  \\ \hline
    $\mathcal{G}(x,1)$ & 0 & 1 & 2 & 0 & 1 & 1 & 0 & 0 & \dots & 1 & 1 & 0 & 0 & 1 & 3 & 4 & 2\\ \hline
\end{tabular}
}
\end{table}

\begin{table}[H] 
  \centering 
  \caption{Grundy Numbers for $x=4n+4,\dots, 8n-1$}
  \label{table4n4} 
{
\setlength{\tabcolsep}{1pt} 
\begin{tabular}{@{\hspace{-3.5pt} \vrule width 1.4pt \hspace{3pt}}r|r|r|r@{\hspace{0.4pt} \vrule width 1.4pt \hspace{3pt}}r|r|r|r@{\hspace{0.4pt} \vrule width 1.4pt \hspace{0.0pt}}r @{\hspace{0.4pt} \vrule width 1.4pt \hspace{3pt}}r|r|r|r@{\hspace{0.4pt} \vrule width 1.4pt \hspace{0pt}}} \hline
 $4n{+}4$ & $4n{+}5$ & $4n{+}6$ & $4n{+}7$ & $4n{+}8$ & $4n{+}9$ & $4n{+}10$ & $4n{+}11$ &  \dots & $8n{-}4$ & $8n{-}3$ & $8n{-}2$ & $8n{-}1$\\ \hline
 2 & 2 & 3 & 3 & 2 & 2 & 3 & 3 & \dots & 2 & 2 & 3 & 3\\ \hline
 0 & 3 & 2 & 2 & 3 & 3 & 2 & 2 & \dots & 3 & 3 & 2 & 2\\ \hline
\end{tabular}
}
\end{table}

\begin{table}[H] 
  \centering 
  \caption{Grundy Numbers for $x=8n,\dots, 8n+11$}
  \label{table8n} 
{
\setlength{\tabcolsep}{1pt} 
\begin{tabular}{@{\hspace{-3.5pt} \vrule width 1.4pt \hspace{3pt}}r|r|r|r@{\hspace{0.7pt} \vrule width 1.4pt \hspace{3pt}}r|r|r|r@{\hspace{0.7pt} \vrule width 1.4pt \hspace{0.0pt}}r|r|r|r@{\hspace{0.7pt} \vrule width 1.4pt \hspace{0.0pt}}  r |}\hline
 $8n$ & $8n{+}1$ & $8n{+}2$ & $8n{+}3$ & $8n{+}4$ & $8n{+}5$ & $8n{+}6$ & $8n{+}7$ & $8n{+}8$ & $8n{+}9$ & $8n{+}10$ & $8n{+}11$ &  \dots \\ \hline
 0 & 0 & 1 & 1 & 0 & 0 & 1 & 1 & 0 & 0 & 1 & 1 &  \dots\\ \hline
 3 & 1 & 0 & 0 & 1 & 1 & 3 & 0 & 1 & 1 & 0 & 0 &  \dots \\ \hline
\end{tabular}
}
\end{table}

\begin{table}[H] 
  \centering 
  \caption{Grundy Numbers for $x=12n-4,\dots, 12n+3$}
  \label{table12nm4} 
{
\setlength{\tabcolsep}{1pt} 
\begin{tabular}{@{\hspace{-3.4pt} \vrule width 1.4pt \hspace{0pt}}r|r|r|r @{\hspace{0.4pt} \vrule width 1.4pt \hspace{0pt}}r|r|r|r @{\hspace{0.4pt} \vrule width 1.4pt \hspace{0pt}}} \hline
 $12n{-}4$ & $12n{-}3$ & $12n{-}2$ & $12n{-}1$ &  $12n$ & $12n{+}1$ & $12n{+}2$ & $12n{+}3$  \\ \hline
 0 & 0 & 1 & 1     & 2 & 2 & 3 & 3  \\ \hline
 1 & 1 & 0 & 0     & 1 & 3 & 2 & 2  \\ \hline
\end{tabular}
}
\end{table}

\begin{table}[H] 
  \centering 
  \caption{Grundy Numbers for $x=12n+4,\dots, 12n+8$}
  \label{table12n4} 
{
\setlength{\tabcolsep}{1pt} 
\begin{tabular}{@{\hspace{-3.4pt} \vrule width 1.4pt \hspace{0pt}}r|r|r|r@{\hspace{0.7pt} \vrule width 1.4pt \hspace{0.0pt}}  r |}\hline
$12n{+}4$ & $12n{+}5$ & $12n{+}6$ & $12n{+}7$ &  $12n{+}8$  \\ \hline
 2 & 2 & 3 & 3 & 2    \\ \hline
 3 & 3 & 0 & 2 & 4   \\ \hline
\end{tabular}
}
\end{table}

\begin{definition}\label{defofabcdedfg11}
Let $A=\{0,1,2,0\}$, $B=\{1,1,0,0,\}$, $C=\{1,3,4,2\}$, $D=\{0,3,2,2,\}$, $E=\{3,3,2,2\}$, $F=\{3,1,0,0,1,1,3,0\}$, and $G=\{1,3,2,2,3,3,0,2,4\}$.     
\end{definition}

\begin{example}
For the sets of numbers defined in Definition \ref{defofabcdedfg11}, we denote by 
$AB$ the set of numbers $\{0,1,2,0,1,1,0,0\}$ and by $B^2$ the set of numbers $\{1,1,0,0,1,1,0,0\}$.
\end{example}

\begin{theorem}
For Grundy numbers of the subtraction Nim with the subtraction set $\{s_1,s_2,s_3\}$ $= \{2, 4n, 4n+2\}$, we have the following equations.
\begin{equation}
\{ \mathcal{G}(k,1): k=1,2, \dots, 12n+8 \} = AB^{n-1}CDE^{n-2}FB^{n-2}G.\label{abcdefbcform}
\end{equation}
\end{theorem}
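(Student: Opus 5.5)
The plan is to compute $\mathcal{G}(x,1)$ directly by strong induction on $x$, using Equation (\ref{calgrundy1}). The values $\mathcal{G}(x,0)$ appearing there are known from Theorem \ref{lemmafor4mcase}.

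First, a remark on indexing. The word on the right-hand side of (\ref{abcdefbcform}) has length $4+4(n-1)+4+4+4(n-2)+8+4(n-2)+9=12n+9$. Together with Table \ref{tables4n3}, this shows it lists $\mathcal{G}(x,1)$ for $x=0,1,\dots,12n+8$, so I will prove the statement in that form. I will take $(0,1)$ as the only terminal position, with $\mathcal{G}(0,1)=0$; for every $x\ge 1$ the pass move to $(x,0)$ is available. This convention is forced by the table, since $\mathcal{G}(1,1)=mex\{\mathcal{G}(1,0)\}=mex\{0\}=1$.

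I split $[0,12n+8]$ into the consecutive segments matching the factors of the word:
\begin{itemize}
\item $A$ on $[0,3]$ and $B^{n-1}$ on $[4,4n-1]$;
\item $C$ on $[4n,4n+3]$, $D$ on $[4n+4,4n+7]$ and $E^{n-2}$ on $[4n+8,8n-1]$;
\item $F$ on $[8n,8n+7]$ and $B^{n-2}$ on $[8n+8,12n-1]$;
\item $G$ on $[12n,12n+8]$.
\end{itemize}
The hypothesis $n\ge 3$ ensures these segments are well defined, with $E^{n-2}$ and $B^{n-2}$ nonempty. For each segment I compute the mex of the four options $\mathcal{G}(x-2,1)$, $\mathcal{G}(x-4n,1)$, $\mathcal{G}(x-4n-2,1)$ and $\mathcal{G}(x,0)$, omitting any option whose argument is negative.

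On $[0,4n-1]$ only the option $x-2$ and the pass exist. Thus $\mathcal{G}(x,1)=mex\{\mathcal{G}(x-2,1),\mathcal{G}(x,0)\}$, where $\mathcal{G}(x,0)$ is $0,0,1,1$ periodically. A short induction within each residue class mod $4$ gives $A$ followed by repeated $B$. For example, at $x\equiv 0 \pmod 4$ with $x\ge 4$, the options are $0$ and $0$, giving $1$. At $x\equiv 2$ the options are $1$ and $1$, giving $0$.

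In the periodic interior segments $B^{n-1}$, $E^{n-2}$ and $B^{n-2}$, I will do an induction on the block index $m$ in the style of the proof of Theorem \ref{lemmafor4mcase}. For $x=4n+4m+t$ with $2\le m\le n-1$, the options $x-4n$ and $x-4n-2$ fall inside the $B$-blocks, with values $1,1,0,0$. The option $x-2$ falls in the previous $E$-block, and $\mathcal{G}(x,0)\in\{2,3\}$. A four-case check on $t$ shows each mex reproduces $E=\{3,3,2,2\}$. Similarly, for $x=8n+4m+t$ the values $x-4n$ and $x-4n-2$ lie in $E$-blocks, $x-2$ lies in the preceding $B$ (or in $F$), and $\mathcal{G}(x,0)\in\{0,1\}$. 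This reproduces $B$.

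The main obstacle is the boundary segments $C$, $D$, $F$ and $G$, where the irregular values arise. There, the three subtraction options straddle segment boundaries: some land in $A$ or in the last symbols of a neighbouring block, rather than in a clean periodic copy. The pass option $\mathcal{G}(x,0)$ then becomes decisive; for instance, it produces the value $4$ at $x=4n+2$ and $x=12n+8$. I will handle these $4+4+8+9=25$ positions individually, in increasing order, reading each needed earlier value off the already-established prefix of the word and off Theorem \ref{lemmafor4mcase}. In each case I must carefully identify which symbol of which factor the arguments $x-2$, $x-4n$ and $x-4n-2$ correspond to.
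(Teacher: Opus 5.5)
Your proposal is correct and follows essentially the same route as the paper. Both compute each $\mathcal{G}(x,1)$ in increasing order as the mex of $\mathcal{G}(x-2,1)$, $\mathcal{G}(x-4n,1)$, $\mathcal{G}(x-4n-2,1)$ and the pass option $\mathcal{G}(x,0)$ from Theorem~\ref{lemmafor4mcase}, using block inductions on the periodic stretches and explicit computations at $C$, $D$, $F$ (with the first $B$-block) and $G$. The paper's proof also starts at $x=0$, so your reading of the index range as $0,\dots,12n+8$ matches what is actually proved.

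The only blemishes are descriptive:
\begin{itemize}
\item at $x=4$ the options are $\{2,0\}$, not $\{0,0\}$;
\item for $t=2,3$ the option $x-2$ lies in the current block, not the previous one;
\item in the first $E$- and $B$-blocks some arguments land in $D$ or $F$ rather than in $E$ or $B$.
\end{itemize}
None of these changes any of the resulting mex values.
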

\begin{proof} We have five cases.\\
\noindent {\tt Case 1}: We prove Equation (\ref{abcdefbcform}) for $x=0,1,\dots, 4n+3$, and we use Table \ref{tables4n3}.

By Theorem \ref{lemmafor4mcase}, for $m \leq n-1$ 
and $t=0,1$,
\begin{equation}
\mathcal{G}(4m+t,0)=0  \label{4mt00}
\end{equation}
and for $m \leq n-1$ 
and $t=2,3$,
\begin{equation}
\mathcal{G}(4m+t,0)=1.  \label{4mt01}  
\end{equation}
Since we can use only the subtraction $s_1=2$,
$\mathcal{G}(0,1)=mex(\emptyset )=0$,
$\mathcal{G}(1,1)=mex(\{\mathcal{G}(1,0)\})=mex(\{0\})=1,$
$\mathcal{G}(2,1)=mex(\{\mathcal{G}(0,1), \mathcal{G}(2,0)\})=mex(\{0,1\})=2,$
$\mathcal{G}(3,1)=mex(\{\mathcal{G}(1,1), \mathcal{G}(3,0)\})= $ $mex(\{1,1\})=0,$
$\mathcal{G}(4,1)= $  $mex(\{\mathcal{G}(2,1),$ $ \mathcal{G}(4,0)\}) $ $=mex(\{2,0\})=1,$
$\mathcal{G}(5,1)=$ $mex(\{\mathcal{G}(3,1), \mathcal{G}(5,0)\})=mex(\{0,0\})=1,$   
$\mathcal{G}(6,1)=$ $mex(\{\mathcal{G}(4,1), \mathcal{G}(6,0)\})=mex(\{1,1\})=0,$ 
$\mathcal{G}(7,1)=$ \\ $mex(\{\mathcal{G}(5,1),\mathcal{G}(7,0)\})=mex(\{1,1\})=0.$ 

Next, we prove that 
\begin{equation}
\mathcal{G}(4m+t,1)  = 1  \label{formula4mt12}
\end{equation}
for $t=0,1$ and $2 \leq m \leq n-1$, and 
\begin{equation}
\mathcal{G}(4m+t,1)=0  \label{formula4mt23}
\end{equation}
for $t=2,3$ and $1 \leq m \leq n-1$ by mathematical induction. Here we use only $s_1=2$.
We assume that Equations (\ref{formula4mt12}) and (\ref{formula4mt23}) are 
valid for $m$ such that $m < p$. For $t=0,1$, $t+2=2,3$, and hence, by 
Equations (\ref{4mt00}), (\ref{4mt01}) and 
mathematical induction assumption for Equations (\ref{formula4mt12}) and (\ref{formula4mt23}), 
\begin{equation}
\mathcal{G}(4p+t,1)=mex(\{\mathcal{G}(4(p-1)+t+2,1), \mathcal{G}(4p+t,0)\})=mex(\{0,0\})=1 \nonumber   
\end{equation}
 for $2 \leq p \leq n-1$, and 
\begin{equation}
\mathcal{G}(4p+t,1)= mex(\{\mathcal{G}(4p+t-2,1), \mathcal{G}(4p+t,0)\})=mex(\{1,1\})=0 \nonumber   
\end{equation}
for $t=2,3$ and $1 \leq p \leq n-1$.

For $x=4n, 4n+1$, we use $s_1=2$ and $s_2=4n$.
$\mathcal{G}(4n,1)$ $=mex(\{\mathcal{G}(0,1), \mathcal{G}(4n-2,1), \mathcal{G}(4n,0)\}) $ $=mex(\{0,0,2\})=1$.
$\mathcal{G}(4n+1,1)$ $=mex(\{\mathcal{G}(1,1), \mathcal{G}(4n-1,1), \mathcal{G}(4n+1,0)\}) $ $=mex(\{1,0,2\})=3$.

For $4n+2, 4n+3$, we use $s_1=2$, $s_2=4n$ and $s_3=4n+2$.
$\mathcal{G}(4n+2,1)$ $=mex(\{\mathcal{G}(0,1), \mathcal{G}(2,1), \mathcal{G}(4n,1),  \mathcal{G}(4n+2,0)\}) $ $=mex(\{0,2,1,3\})=4$.
$\mathcal{G}(4n+3,1)$ $=mex(\{\mathcal{G}(1,1),\mathcal{G}(3,1), \mathcal{G}(4n+1,1), \mathcal{G}(4n+3,0)\}) $ $=mex(\{1,0,3,3\})=2$.\\
\noindent {\tt Case 2}: We prove Equation (\ref{abcdefbcform}) for $x=4n+4, \dots, 8n-1$, and we use Table \ref{table4n4}.
\begin{equation}
\mathcal{G}(4n+4,1)=mex(\{\mathcal{G}(2,1),\mathcal{G}(4,1), \mathcal{G}(4n+2,1), \mathcal{G}(4n+4,0)\}) =mex(\{2,1,4,2\})=0, \nonumber
\end{equation}
\begin{equation}
\mathcal{G}(4n+5,1)=mex(\{\mathcal{G}(3,1),\mathcal{G}(5,1), \mathcal{G}(4n+3,1), \mathcal{G}(4n+5,0)\}) =mex(\{0,1,2,2\})=3, \nonumber
\end{equation}
\begin{equation}
\mathcal{G}(4n+6,1)=mex(\{\mathcal{G}(4,1),\mathcal{G}(6,1), \mathcal{G}(4n+4,1), \mathcal{G}(4n+6,0)\}) =mex(\{1,0,0,3\})=2, \nonumber
\end{equation}
\begin{equation}
\mathcal{G}(4n+7,1)=mex(\{\mathcal{G}(5,1),\mathcal{G}(7,1), \mathcal{G}(4n+5,1), \mathcal{G}(4n+7,0)\}) =mex(\{1,0,3,3\})=2, \nonumber
\end{equation}
For $m$ such that $n+2 \leq m \leq 2n-1$,
$1 \leq m-n-1 \leq n-2$, $2 \leq m-n \leq n-1$ and $n+1 \leq m-1 \leq 2n-2$, and hence for $t=0,1$,
\begin{align}
\mathcal{G}(4m+t,1)& =mex(\{\mathcal{G}(4(m-n-1)+t+2,1),\mathcal{G}(4(m-n)+t,1), \nonumber \\
& \ \ \ \ \ \ \ \ \ \ \ \ \ \mathcal{G}(4(m-1)+t+2,1), \mathcal{G}(4m+t,0)\})  \nonumber \\
& = mex(\{0,1,2,2\})=3,\nonumber 
\end{align}
and for $t=2,3$,
\begin{align}
\mathcal{G}(4m+t,1)& =mex(\{\mathcal{G}(4(m-n)+t-2,1),\mathcal{G}(4(m-n)+t,1), \nonumber \\
& \ \ \ \ \ \ \ \ \ \ \ \ \ \mathcal{G}(4m+t-2,1), \mathcal{G}(4m+t,0)\})  \nonumber \\
& = mex(\{1,0,3,3\})=2.\nonumber 
\end{align}
\noindent {\tt Case 3}: We prove Equation (\ref{abcdefbcform}) for $x=8n, \dots, 8n+11$, and we use Table \ref{table8n}.
\begin{align}
\mathcal{G}(8n,1)&=mex(\{\mathcal{G}(4n-2,1),\mathcal{G}(4n,1), \mathcal{G}(8n-2,1), \mathcal{G}(8n,0)\}) \nonumber \\
& \ \ \ \ \ \ \ \ \ \ \ \ \ =mex(\{0,1,2,0\})=3, \nonumber \\
\mathcal{G}(8n+1,1)&=mex(\{\mathcal{G}(4n-1,1),\mathcal{G}(4n+1,1), \mathcal{G}(8n-1,1), \mathcal{G}(8n+1,0)\})\nonumber \\ 
& \ \ \ \ \ \ \ \ \ \ \ \ \ =mex(\{0,3,2,0\})=1, \nonumber
\end{align}
\begin{align}
\mathcal{G}(8n+2,1)& =mex(\{\mathcal{G}(4n,1),\mathcal{G}(4n+2,1), \mathcal{G}(8n,1), \mathcal{G}(8n+2,0)\}) \nonumber \\
& \ \ \ \ \ \ \ \ \ \ \ \ \ =mex(\{1,4,3,1\})=0, \nonumber \\
\mathcal{G}(8n+3,1)& =mex(\{\mathcal{G}(4n+1,1),\mathcal{G}(4n+3,1), \mathcal{G}(8n+1,1), \mathcal{G}(8n+3,0)\})\nonumber \\ 
& \ \ \ \ \ \ \ \ \ \ \ \ \ =mex(\{3,2,1,1\})=0, \nonumber
\end{align}
\begin{align}
\mathcal{G}(8n+4,1)&=mex(\{\mathcal{G}(4n+2,1),\mathcal{G}(4n+4,1), \mathcal{G}(8n+2,1), \mathcal{G}(8n+4,0)\}) \nonumber \\
& \ \ \ \ \ \ \ \ \ \ \ \ \ =mex(\{4,0,0,0\})=1, \nonumber
\end{align}
\begin{align}
\mathcal{G}(8n+5,1)& =mex(\{\mathcal{G}(4n+3,1),\mathcal{G}(4n+5,1), \mathcal{G}(8n+3,1), \mathcal{G}(8n+5,0)\}) \nonumber \\
& \ \ \ \ \ \ \ \ \ \ \ \ \ =mex(\{2,3,0,0\})=1, \nonumber
\end{align}
\begin{align}
\mathcal{G}(8n+6,1)&=mex(\{\mathcal{G}(4n+4,1),\mathcal{G}(4n+6,1), \mathcal{G}(8n+4,1), \mathcal{G}(8n+6,0)\}) \nonumber \\
& \ \ \ \ \ \ \ \ \ \ \ \ \ =mex(\{0,2,1,1\})=3, \nonumber
\end{align}
\begin{align}
\mathcal{G}(8n+7,1)&=mex(\{\mathcal{G}(4n+5,1),\mathcal{G}(4n+7,1), \mathcal{G}(8n+5,1), \mathcal{G}(8n+7,0)\}) \nonumber \\
& \ \ \ \ \ \ \ \ \ \ \ \ \ =mex(\{3,2,1,1\})=0, \nonumber
\end{align}
\begin{align}
\mathcal{G}(8n+8,1)&=mex(\{\mathcal{G}(4n+6,1),\mathcal{G}(4n+8,1), \mathcal{G}(8n+6,1), \mathcal{G}(8n+8,0)\}) \nonumber \\
& \ \ \ \ \ \ \ \ \ \ \ \ \ =mex(\{2,3,3,0\})=1, \nonumber
\end{align}
\begin{align}
\mathcal{G}(8n+9,1)&=mex(\{\mathcal{G}(4n+7,1),\mathcal{G}(4n+9,1), \mathcal{G}(8n+7,1), \mathcal{G}(8n+9,0)\}) \nonumber \\
& \ \ \ \ \ \ \ \ \ \ \ \ \ =mex(\{2,3,0,0\})=1, \nonumber
\end{align}
\begin{align}
\mathcal{G}(8n+10,1)&=mex(\{\mathcal{G}(4n+8,1),\mathcal{G}(4n+10,1), \mathcal{G}(8n+8,1), \mathcal{G}(8n+10,0)\}) \nonumber \\
& \ \ \ \ \ \ \ \ \ \ \ \ \ =mex(\{3,2,1,1\})=0, \nonumber
\end{align}
\begin{align}
\mathcal{G}(8n+11,1)&=mex(\{\mathcal{G}(4n+9,1),\mathcal{G}(4n+11,1), \mathcal{G}(8n+9,1), \mathcal{G}(8n+11,0)\}) \nonumber \\
& \ \ \ \ \ \ \ \ \ \ \ \ \ =mex(\{3,2,1,1\})=0. \nonumber
\end{align}
\noindent {\tt Case 4}: We prove Equation (\ref{abcdefbcform}) for $x=8n+12, \dots, 12n-1$, and we use the last part of Table \ref{table8n} and the first part of Table \ref{table12nm4}.

For $m$ such that $3 \leq m \leq n-1$, we obtain
\begin{equation}
4n+12 \leq 4n+4m \leq 8n-4, \nonumber
\end{equation}
\begin{equation}
4n+8 \leq 4n+4(m-1) \leq 8n-8,    \nonumber
\end{equation}
\begin{equation}
8n+8 \leq 8n+4(m-1) \leq 12n-8,    \nonumber
\end{equation}
and 
\begin{equation}
8n+12 \leq 8n+4m \leq 12n-4.    \nonumber
\end{equation}
Hence for $t=0,1$,
\begin{align}
\mathcal{G}(8n+4m+t,1)& =mex(\{\mathcal{G}(4n+4(m-1)+t+2,1),\mathcal{G}(4n+4m+t,1), \nonumber \\
& \ \ \ \ \ \ \ \ \ \ \ \ \ \mathcal{G}(8n+4(m-1)+t+2,1), \mathcal{G}(8n+4m+t,0)\})  \nonumber \\
& = mex(\{2,3,0,0\})=1,\nonumber 
\end{align}
and for $t=2,3$,
\begin{align}
\mathcal{G}(8n+4m+t,1)& =mex(\{\mathcal{G}(4n+4m+t-2,1),\mathcal{G}(4n+4m+t,1), \nonumber \\
& \ \ \ \ \ \ \ \ \ \ \ \ \ \mathcal{G}(8n+4m+t-2,1), \mathcal{G}(8n+4m+t,0)\})  \nonumber \\
& = mex(\{3,2,1,1\})=0.\nonumber 
\end{align}
\noindent {\tt Case 5}: We prove Equation (\ref{abcdefbcform}) for $x=12n, \dots, 12n+8$, and we use last half of Table \ref{table12n4} and  Table \ref{table12n4}.
 \begin{align}
\mathcal{G}(12n,1)&=mex(\{\mathcal{G}(8n-2,1),\mathcal{G}(8n,1), \mathcal{G}(12n-2,1), \mathcal{G}(12n,0)\}) \nonumber \\
& \ \ \ \ \ \ \ \ \ \ \ \ \ =mex(\{2,3,0,2\})=1, \nonumber \\
\mathcal{G}(12n+1,1)&=mex(\{\mathcal{G}(8n-1,1),\mathcal{G}(8n+1,1), \mathcal{G}(12n-1,1), \mathcal{G}(12n+1,0)\}) \nonumber \\
& \ \ \ \ \ \ \ \ \ \ \ \ \ =mex(\{2,1,0,2\})=3, \nonumber \\
\mathcal{G}(12n+2,1)&=mex(\{\mathcal{G}(8n,1),\mathcal{G}(8n+2,1), \mathcal{G}(12n,1), \mathcal{G}(12n+2,0)\}) \nonumber \\
& \ \ \ \ \ \ \ \ \ \ \ \ \ =mex(\{3,0,1,3\})=2, \nonumber
\end{align}
\begin{align}
\mathcal{G}(12n+3,1)&=mex(\{\mathcal{G}(8n+1,1),\mathcal{G}(8n+3,1), \mathcal{G}(12n+1,1), \mathcal{G}(12n+3,0)\}) \nonumber \\
& \ \ \ \ \ \ \ \ \ \ \ \ \ =mex(\{1,0,3,3\})=2, \nonumber
\end{align}
\begin{align}
\mathcal{G}(12n+4,1)&=mex(\{\mathcal{G}(8n+2,1),\mathcal{G}(8n+4,1), \mathcal{G}(12n+2,1), \mathcal{G}(12n+4,0)\}) \nonumber \\
& \ \ \ \ \ \ \ \ \ \ \ \ \ =mex(\{0,1,2,2\})=3, \nonumber
\end{align}
\begin{align}
\mathcal{G}(12n+5,1)&=mex(\{\mathcal{G}(8n+3,1),\mathcal{G}(8n+5,1), \mathcal{G}(12n+3,1), \mathcal{G}(12n+5,0)\}) \nonumber \\
& \ \ \ \ \ \ \ \ \ \ \ \ \ =mex(\{0,1,2,2\})=3, \nonumber
\end{align}
\begin{align}
\mathcal{G}(12n+6,1)&=mex(\{\mathcal{G}(8n+4,1),\mathcal{G}(8n+6,1), \mathcal{G}(12n+4,1), \mathcal{G}(12n+6,0)\}) \nonumber \\
& \ \ \ \ \ \ \ \ \ \ \ \ \ =mex(\{1,3,3,3\})=0, \nonumber
\end{align}
\begin{align}
\mathcal{G}(12n+7,1)&=mex(\{\mathcal{G}(8n+5,1),\mathcal{G}(8n+7,1), \mathcal{G}(12n+5,1), \mathcal{G}(12n+7,0)\}) \nonumber \\
& \ \ \ \ \ \ \ \ \ \ \ \ \ =mex(\{1,0,3,3\})=2, \nonumber
\end{align}
\begin{align}
\mathcal{G}(12n+8,1)&=mex(\{\mathcal{G}(8n+6,1),\mathcal{G}(8n+8,1), \mathcal{G}(12n+6,1), \mathcal{G}(12n+8,0)\}) \nonumber \\
& \ \ \ \ \ \ \ \ \ \ \ \ \ =mex(\{3,1,0,2\})=4. \nonumber
\end{align}
\end{proof}

\begin{table}[H] 
  \centering 
  \caption{Grundy Numbers for $x=12n+9,\dots, 16n-4$}
  \label{table12n9} 
{
\setlength{\tabcolsep}{1pt} 
\begin{tabular}{@{\hspace{-3.5pt} \vrule width 1.4pt \hspace{3pt}}r|r|r|r@{\hspace{0.4pt} \vrule width 1.4pt \hspace{3pt}}r@{\hspace{0.4pt} \vrule width 1.4pt \hspace{0.0pt}}r|r|r|r @{\hspace{0.4pt} \vrule width 1.4pt \hspace{0pt}}} \hline
 $12n{+}9$ & $12n{+}10$ & $12n{+}11$ & $12n{+}12$ & \dots & $16n{-}7$ & $16n{-}6$ & $16n{-}5$ &  $16n{-}4$  \\ \hline
 2 & 3 & 3 & 2 & \dots & 2 & 3 & 3 & 2 \\ \hline
 3 & 2 & 2 & 3 & \dots & 3 & 2 & 2 & 3 \\ \hline
\end{tabular}
}
\end{table}

\begin{table}[H] 
  \centering 
  \caption{Grundy Numbers for $x=16n-3,\dots, 16n+4$}
  \label{table16nm3} 
{
\setlength{\tabcolsep}{1pt} 
\begin{tabular}{@{\hspace{-3.4pt} \vrule width 1.4pt \hspace{0pt}}r|r|r|r @{\hspace{0.4pt} \vrule width 1.4pt \hspace{0pt}}r|r|r|r @{\hspace{0.4pt} \vrule width 1.4pt \hspace{0pt}}} \hline
 $16n{-}3$ & $16n{-}2$ & $16n{-}1$ & $16n$ &  $16n{+}1$ & $16n{+}2$ & $16n{+}3$ & $16n{+}4$  \\ \hline
 2 & 3 & 3 & 0     & 0 & 1 & 1 & 0  \\ \hline
 3 & 2 & 2 & 3     & 1 & 0 & 0 & 1  \\ \hline
\end{tabular}
}
\end{table}

\begin{table}[H] 
  \centering 
  \caption{Grundy Numbers for $x=16n+5,\dots, 16n+12$}
  \label{table16n5} 
{
\setlength{\tabcolsep}{1pt} 
\begin{tabular}{@{\hspace{-3.4pt} \vrule width 1.4pt \hspace{0pt}}r|r|r|r @{\hspace{0.4pt} \vrule width 1.4pt \hspace{0pt}} r|r|r|r @{\hspace{0.4pt} \vrule width 1.4pt \hspace{0pt}}r |} \hline
 $16n{+}5$ & $16n{+}6$ & $16n{+}7$ & $16n{+}8$ & $16n{+}9$ & $16n{+}10$ & $16n{+}11$ &   $16n{+}12$ & \dots  \\ \hline
 0 & 1 & 1 & 0    & 0 & 1 & 1 & 0 & \dots \\ \hline
1 & 2 & 0 & 1    & 1 & 0 & 0 & 1 & \dots \\ \hline
\end{tabular}
}
\end{table}

\begin{table}[H] 
  \centering 
  \caption{Grundy Numbers for $x=20n-7,\dots, 20n$}
  \label{table20n7} 
{
\setlength{\tabcolsep}{1pt} 
\begin{tabular}{@{\hspace{-3.4pt} \vrule width 1.4pt \hspace{0pt}}r|r|r|r @{\hspace{0.4pt} \vrule width 1.4pt \hspace{0pt}}r|r|r|r@{\hspace{0.4pt} \vrule width 1.4pt \hspace{0pt}}} \hline
 $20n{-}7$ & $20n{-}6$ & $20n{-}5$ & $20n{-}4$ & $20n{-}3$ & $20n{-}2$ & $20n{-}1$ &   $20n$   \\ \hline
 0 & 1 & 1 & 0    & 0 & 1 & 1 & 2 \\ \hline
1 & 0 & 0 & 1    & 1 & 0 & 0 & 1  \\ \hline
\end{tabular}
}
\end{table}

\begin{table}[H] 
  \centering 
  \caption{Grundy Numbers for $x=20n+1,\dots, 20n+8$}
  \label{table20n1} 
{
\setlength{\tabcolsep}{1pt} 
\begin{tabular}{@{\hspace{-3.4pt} \vrule width 1.4pt \hspace{0pt}}r|r|r|r@{\hspace{0.4pt} \vrule width 1.4pt \hspace{0pt}}r|r|r|r@{\hspace{0.4pt} \vrule width 1.4pt \hspace{0pt}}} \hline
 $20n{+}1$ & $20n{+}2$ & $20n{+}3$ & $20n{+}4$ & $20n{+}5$ & $20n{+}6$ & $20n{+}7$ &   $20n{+}8$   \\ \hline
2 & 3 & 3 & 2    & 2 & 3 & 3 & 2 \\ \hline
3 & 2 & 2 & 3    & 3 & 0 & 2 & 3  \\ \hline
\end{tabular}
}
\end{table}

\begin{definition}\label{defofabcdedfg}
Let $P=\{3,2,2,3\}$, $Q=\{1,0,0,1\}$, $R=\{1,2,0,1\}$, $S=\{3,0,2,3\}$.     
\end{definition}

Theorem \ref{periodicth} presents the periodic part of Grundy numbers $\{ \mathcal{G}(x,1): x=0,1,2,\dots \}$.

\begin{theorem}\label{periodicth}
For Grundy numbers of the subtraction Nim with the subtraction set $\{s_1,s_2,s_3\}$ $= \{2, 4n, 4n+2\}$ and a pass, we have the following equation.
\begin{equation}
\{ \mathcal{G}(x,1): x=12n+9, \dots 20n+8  \} = P^{n-2}QRQ^{n-2}PS.\label{periodpattern}
\end{equation}
\end{theorem}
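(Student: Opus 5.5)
The plan is to prove Equation (\ref{periodpattern}) by strong induction on $x$ from $12n+9$ to $20n+8$. At each step I evaluate
\[
\mathcal{G}(x,1)=mex(\{\mathcal{G}(x-2,1),\mathcal{G}(x-4n,1),\mathcal{G}(x-4n-2,1),\mathcal{G}(x,0)\}).
\]
The value $\mathcal{G}(x,0)$ is read off from Theorem \ref{lemmafor4mcase}. Arguments $x-4n$ and $x-4n-2$ lying in $[8n+7,12n+8]$ are read off from Equation (\ref{abcdefbcform}) (Tables \ref{table8n}, \ref{table12nm4}, \ref{table12n4}). All other arguments are values already established by the induction.

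First I align the word $P^{n-2}QRQ^{n-2}PS$ with the positions:
\begin{itemize}
\item $P^{n-2}$ occupies $12n+9,\dots,16n$;
\item $Q$ occupies $16n+1,\dots,16n+4$;
\item $R$ occupies $16n+5,\dots,16n+8$;
\item $Q^{n-2}$ occupies $16n+9,\dots,20n$;
\item $P$ occupies $20n+1,\dots,20n+4$;
\item $S$ occupies $20n+5,\dots,20n+8$.
\end{itemize}
I then split into the same kind of cases as in the proof of Equation (\ref{abcdefbcform}).

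For the generic part of the $P^{n-2}$ range, write $x=12n+9+4j+r$ with $0\le r\le 3$. Here $x-4n$ and $x-4n-2$ lie in the $B$-pattern stretch $8n+9,\dots,12n+7$. Their values are $1,1,0,0$ in the appropriate phase, and $\mathcal{G}(x,0)\in\{2,3\}$. Together with $\mathcal{G}(x-2,1)$ from $P$, each mex is a short computation such as $mex(\{2,1,0,\ast\})=3$ or $mex(\{3,0,1,\ast\})=2$; I run this as an inner induction on $j$.

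For the generic part of the $Q^{n-2}$ range, $x-4n$ and $x-4n-2$ lie in the $P$-pattern stretch, with values in $\{2,3\}$, and $\mathcal{G}(x,0)\in\{0,1\}$. Together with $\mathcal{G}(x-2,1)\in\{0,1\}$ this gives the values $1,0,0,1$, again by induction on the block index.

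The main obstacle is the boundary positions where a predecessor falls on an irregular entry. The relevant entries are $12n+1,\dots,12n+8$ (values $3,2,2,3,3,0,2,4$), $8n+6$, and the transitions $16n\to16n+1$ and $20n\to20n+1$. These are $x=12n+9,\dots,12n+12$, $x=16n+1,\dots,16n+8$, and $x=20n+1,\dots,20n+8$, and I will compute each of them explicitly. Two sample checks:
\begin{itemize}
\item $x=16n+6$: $mex(\{\mathcal{G}(16n+4,1),\mathcal{G}(12n+6,1),\mathcal{G}(12n+4,1),\mathcal{G}(16n+6,0)\})=mex(\{1,0,3,1\})=2$, which produces the $2$ in $R$.
\item $x=20n+6$: $mex(\{3,2,1,3\})=0$, which produces the $0$ in $S$.
\end{itemize}
Throughout, care is needed with the $t$-parity phase from Theorem \ref{lemmafor4mcase}, and with the degenerate cases $n=3$, where $P^{n-2}$ and $Q^{n-2}$ each have a single block, so that the generic and boundary cases overlap.
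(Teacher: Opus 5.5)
Your plan is essentially the paper's proof. It uses the same four-option mex recursion fed by Theorem~\ref{lemmafor4mcase} and Equation~(\ref{abcdefbcform}), handles the periodic $P$- and $Q$-stretches by induction on blocks, and computes the transition blocks by hand; your sample values at $16n+6$ and $20n+6$ are correct. Your descriptions of the generic ranges are slightly off at a few points: $\mathcal{G}(16n,0)=0$ and $\mathcal{G}(20n,0)=2$, so they do not lie in $\{2,3\}$ and $\{0,1\}$ respectively; the $B$-stretch you actually use is $8n+7,\dots,12n$; and at $x=16n+10$ the option $x-4n-2=12n+8$ has value $4$, which is why the paper computes $16n+5,\dots,16n+12$ explicitly. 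A direct check shows that none of these changes the resulting mex.
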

\begin{proof}
\noindent {\tt Case 1}: We prove Equation (\ref{periodpattern}) for $x=12n+9, \dots, 12n+12$, and we use Table \ref{table12n9}.
\begin{align}
\mathcal{G}(12n+9,1)&=mex(\{\mathcal{G}(8n+7,1),\mathcal{G}(8n+9,1), \mathcal{G}(12n+7,1), \mathcal{G}(12n+9,0)\}) \nonumber \\
& =mex(\{0,1,2,2\})=3, \nonumber \\
\mathcal{G}(12n+10,1)&=mex(\{\mathcal{G}(8n+8,1),\mathcal{G}(8n+10,1), \mathcal{G}(12n+8,1), \mathcal{G}(12n+10,0)\}) \nonumber \\
&  =mex(\{1,0,4,3\})=2, \label{first4} \\
\mathcal{G}(12n+11,1)&=mex(\{\mathcal{G}(8n+9,1),\mathcal{G}(8n+11,1), \mathcal{G}(12n+9,1), \mathcal{G}(12n+11,0)\}) \nonumber \\
&  =mex(\{1,0,3,3\})=2, \nonumber \\
\mathcal{G}(12n+12,1)&=mex(\{\mathcal{G}(8n+10,1),\mathcal{G}(8n+12,1), \mathcal{G}(12n+10,1), \mathcal{G}(12n+12,0)\}) \nonumber \\
&  =mex(\{0,1,2,2\})=3. \nonumber
\end{align}
\noindent {\tt Case 2}: We prove Equation (\ref{periodpattern}) for $x=12n+4m+t$ for $3 \leq m \leq n-2$ and $t=1,2,3,4$ using mathematical induction. Here, $12 \leq 4m \leq 4n-8$ and $8 \leq 4(m-1) \leq 4n-12$.
Here, we use Table \ref{table12n9}.
\begin{align}
\mathcal{G}(12n+4m+1,1)& =mex(\{\mathcal{G}(8n+4(m-1)+3,1),\mathcal{G}(8n+4m+1,1), \nonumber \\
& \ \ \ \ \ \ \ \ \ \ \ \ \ \mathcal{G}(12n+4(m-1)+3,1), \mathcal{G}(12n+4m+1,0)\})  \nonumber \\
& = mex(\{0,1,2,2\})=3,\nonumber \\
\mathcal{G}(12n+4m+2,1)& =mex(\{\mathcal{G}(8n+4m,1),\mathcal{G}(8n+4m+2,1), \nonumber \\
& \ \ \ \ \ \ \ \ \ \ \ \ \ \mathcal{G}(12n+4m,1), \mathcal{G}(12n+4m+2,0)\})  \nonumber \\
& = mex(\{1,0,3,3\})=2,\nonumber \\
\mathcal{G}(12n+4m+3,1)& =mex(\{\mathcal{G}(8n+4m+1,1),\mathcal{G}(8n+4m+3,1), \nonumber \\
& \ \ \ \ \ \ \ \ \ \ \ \ \ \mathcal{G}(12n+4m+1,1), \mathcal{G}(12n+4m+3,0)\})  \nonumber \\
& = mex(\{1,0,3,3\})=2,\nonumber \\
\mathcal{G}(12n+4m+4,1)& =mex(\{\mathcal{G}(8n+4m+2,1),\mathcal{G}(8n+4m+4,1), \nonumber \\
& \ \ \ \ \ \ \ \ \ \ \ \ \ \mathcal{G}(12n+4m+2,1), \mathcal{G}(12n+4m+4,0)\})  \nonumber \\
& = mex(\{0,1,2,2\})=3.\nonumber 
\end{align}
\noindent {\tt Case 3}: We prove Equation (\ref{periodpattern}) for $x=16n-3 \dots, 16n+4$. Here, we use Table \ref{table16nm3}.
\begin{align}
\mathcal{G}(16n-3,1)&=mex(\{\mathcal{G}(12n-5,1),\mathcal{G}(12n-3,1), \mathcal{G}(16n-5,1), \mathcal{G}(16n-3,0)\}) \nonumber \\
&  =mex(\{0,1,2,2\})=3, \nonumber \\
\mathcal{G}(16n-2,1)&=mex(\{\mathcal{G}(12n-4,1),\mathcal{G}(12n-2,1), \mathcal{G}(16n-4,1), \mathcal{G}(16n-2,0)\}) \nonumber \\
&  =mex(\{1,0,3,3\})=2, \nonumber
\end{align}
\begin{align}
\mathcal{G}(16n-1,1)&=mex(\{\mathcal{G}(12n-3,1),\mathcal{G}(12n-1,1), \mathcal{G}(16n-3,1), \mathcal{G}(16n-1,0)\}) \nonumber \\
&  =mex(\{1,0,3,3\})=2, \nonumber \\
\mathcal{G}(16n,1)&=mex(\{\mathcal{G}(12n-2,1),\mathcal{G}(12n,1), \mathcal{G}(16n-2,1), \mathcal{G}(16n,0)\}) \nonumber \\
&  =mex(\{0,1,2,0\})=3, \nonumber
\end{align}
\begin{align}
\mathcal{G}(16n+1,1)&=mex(\{\mathcal{G}(12n-1,1),\mathcal{G}(12n+1,1), \mathcal{G}(16n-1,1), \mathcal{G}(16n+1,0)\}) \nonumber \\
&  =mex(\{0,3,2,0\})=1, \nonumber \\
\mathcal{G}(16n+2,1)&=mex(\{\mathcal{G}(12n,1),\mathcal{G}(12n+2,1), \mathcal{G}(16n,1), \mathcal{G}(16n+2,0)\}) \nonumber \\
&  =mex(\{1,2,3,1\})=0, \nonumber
\end{align}
\begin{align}
\mathcal{G}(16n+3,1)&=mex(\{\mathcal{G}(12n+1,1),\mathcal{G}(12n+3,1), \mathcal{G}(16n+1,1), \mathcal{G}(16n+3,0)\}) \nonumber \\
&  =mex(\{3,2,1,1\})=0, \nonumber
\end{align}
\begin{align}
\mathcal{G}(16n+4,1)&=mex(\{\mathcal{G}(12n+2,1),\mathcal{G}(12n+4,1), \mathcal{G}(16n+2,1), \mathcal{G}(16n+4,0)\}) \nonumber \\
&  =mex(\{2,3,0,0\})=1. \nonumber
\end{align}
\noindent {\tt Case 4}: We prove Equation (\ref{periodpattern}) for $x=16n+5 \dots, 16n+12$. Here, we use Table \ref{table16n5}.
\begin{align}
\mathcal{G}(16n+5,1)&=mex(\{\mathcal{G}(12n+3,1),\mathcal{G}(12n+5,1), \mathcal{G}(16n+3,1), \mathcal{G}(16n+5,0)\}) \nonumber \\
&  =mex(\{2,3,0,0\})=1, \nonumber
\end{align}
\begin{align}
\mathcal{G}(16n+6,1)&=mex(\{\mathcal{G}(12n+4,1),\mathcal{G}(12n+6,1), \mathcal{G}(16n+4,1), \mathcal{G}(16n+6,0)\}) \nonumber \\
&  =mex(\{3,0,1,1\})=2, \nonumber
\end{align}
\begin{align}
\mathcal{G}(16n+7,1)&=mex(\{\mathcal{G}(12n+5,1),\mathcal{G}(12n+7,1), \mathcal{G}(16n+5,1), \mathcal{G}(16n+7,0)\}) \nonumber \\
&  =mex(\{3,2,1,1\})=0, \nonumber
\end{align}
\begin{align}
\mathcal{G}(16n+8,1)&=mex(\{\mathcal{G}(12n+6,1),\mathcal{G}(12n+8,1), \mathcal{G}(16n+6,1), \mathcal{G}(16n+8,0)\}) \nonumber \\
&  =mex(\{0,4,2,0\})=1, \label{second4}     
\end{align}
\begin{align}
\mathcal{G}(16n+9,1)&=mex(\{\mathcal{G}(12n+7,1),\mathcal{G}(12n+9,1), \mathcal{G}(16n+7,1), \mathcal{G}(16n+9,0)\}) \nonumber \\
&  =mex(\{2,3,0,0\})=1, \nonumber
\end{align}
\begin{align}
\mathcal{G}(16n+10,1)&=mex(\{\mathcal{G}(12n+8,1),\mathcal{G}(12n+10,1), \nonumber \\
& \ \ \ \ \ \ \ \ \ \ \ \ \ \ \mathcal{G}(16n+8,1), \mathcal{G}(16n+10,0)\}) \nonumber \\
&  =mex(\{4,2,1,1\})=0, \label{third4} 
\end{align}
\begin{align}
\mathcal{G}(16n+11,1)&=mex(\{\mathcal{G}(12n+9,1),\mathcal{G}(12n+11,1), \nonumber \\
& \ \ \ \ \ \ \ \ \ \ \ \ \ \ \mathcal{G}(16n+9,1), \mathcal{G}(16n+11,0)\}) \nonumber \\
&  =mex(\{3,2,1,1\})=0,  \nonumber  
\end{align}
\begin{align}
\mathcal{G}(16n+12,1)&=mex(\{\mathcal{G}(12n+10,1),\mathcal{G}(12n+12,1), \nonumber \\
& \ \ \ \ \ \ \ \ \ \ \ \ \ \ \mathcal{G}(16n+10,1), \mathcal{G}(16n+12,0)\}) \nonumber \\
&  =mex(\{2,3,0,0\})=1. \nonumber
\end{align}
\noindent {\tt Case 5}: We prove Equation (\ref{periodpattern}) for $x=16n+4m+t$ for $3 \leq m \leq n-2$ and $t=1,2,3,4$ using mathematical induction.
Here, we have $12 \leq 4m \leq 4n-8$ and $8 \leq 4(m-1) \leq 4n-12$.
We use the last part of Table \ref{table16n5} and the first half of  Table \ref{table20n7}.
\begin{align}
\mathcal{G}(16n+4m+1,1)& =mex(\{\mathcal{G}(12n+4(m-1)+3,1),\mathcal{G}(12n+4m+1,1), \nonumber \\
& \ \ \ \ \ \ \ \ \ \ \ \ \ \mathcal{G}(16n+4(m-1)+3,1), \mathcal{G}(16n+4m+1,0)\})  \nonumber \\
& = mex(\{2,3,0,0\})=1,\nonumber 
\end{align}
\begin{align}
\mathcal{G}(16n+4m+2,1)& =mex(\{\mathcal{G}(12n+4m,1),\mathcal{G}(12n+4m+2,1), \nonumber \\
& \ \ \ \ \ \ \ \ \ \ \ \ \ \mathcal{G}(16n+4m,1), \mathcal{G}(16n+4m+2,0)\})  \nonumber \\
& = mex(\{3,2,1,1\})=0,\nonumber 
\end{align}
\begin{align}
\mathcal{G}(16n+4m+3,1)& =mex(\{\mathcal{G}(12n+4m+1,1),\mathcal{G}(12n+4m+3,1), \nonumber \\
& \ \ \ \ \ \ \ \ \ \ \ \ \ \mathcal{G}(16n+4m+1,1), \mathcal{G}(16n+4m+3,0)\})  \nonumber \\
& = mex(\{3,2,1,1\})=0,\nonumber 
\end{align}
\begin{align}
\mathcal{G}(16n+4m+4,1)& =mex(\{\mathcal{G}(12n+4m+2,1),\mathcal{G}(12n+4m+4,1), \nonumber \\
& \ \ \ \ \ \ \ \ \ \ \ \ \ \mathcal{G}(16n+4m+2,1), \mathcal{G}(16n+4m+4,0)\})  \nonumber \\
& = mex(\{2,3,0,0\})=1.\nonumber 
\end{align}
\noindent {\tt Case 6}: We prove Equation (\ref{periodpattern}) for $x=20n-3, 20n-2, \dots, 20n+8$. Here, we use the last half of  Table \ref{table20n7}.

\begin{align}
\mathcal{G}(20n-3,1)&=mex(\{\mathcal{G}(16n-5,1),\mathcal{G}(16n-3,1), \mathcal{G}(20n-5,1), \mathcal{G}(20n-3,0)\}) \nonumber \\
&  =mex(\{2,3,0,0\})=1,. \nonumber
\end{align}
\begin{align}
\mathcal{G}(20n-2,1)&=mex(\{\mathcal{G}(16n-4,1),\mathcal{G}(16n-2,1), \mathcal{G}(20n-4,1), \mathcal{G}(20n-2,0)\}) \nonumber \\
&  =mex(\{3,2,1,1\})=0, \nonumber
\end{align}
\begin{align}
\mathcal{G}(20n-1,1)&=mex(\{\mathcal{G}(16n-3,1),\mathcal{G}(16n-1,1), \mathcal{G}(20n-3,1), \mathcal{G}(20n-1,0)\}) \nonumber \\
&  =mex(\{3,2,1,1\})=0, \nonumber \\
\mathcal{G}(20n,1)&=mex(\{\mathcal{G}(16n-2,1),\mathcal{G}(16n,1), \mathcal{G}(20n-2,1), \mathcal{G}(20n,0)\}) \nonumber \\
&  =mex(\{2,3,0,2\})=1, \nonumber
\end{align}
\begin{align}
\mathcal{G}(20n+1,1)&=mex(\{\mathcal{G}(16n-1,1),\mathcal{G}(16n+1,1), \mathcal{G}(20n-1,1), \mathcal{G}(20n+1,0)\}) \nonumber \\
&  =mex(\{2,1,0,2\})=3, \nonumber \\
\mathcal{G}(20n+2,1)&=mex(\{\mathcal{G}(16n,1),\mathcal{G}(16n+2,1), \mathcal{G}(20n,1), \mathcal{G}(20n+2,0)\}) \nonumber \\
&  =mex(\{3,0,1,3\})=2, \nonumber
\end{align}
\begin{align}
\mathcal{G}(20n+3,1)&=mex(\{\mathcal{G}(16n+1,1),\mathcal{G}(16n+3,1), \mathcal{G}(20n+1,1), \mathcal{G}(20n+3,0)\}) \nonumber \\
&  =mex(\{1,0,3,3\})=2, \nonumber
\end{align}
\begin{align}
\mathcal{G}(20n+4,1)&=mex(\{\mathcal{G}(16n+2,1),\mathcal{G}(16n+4,1), \mathcal{G}(20n+2,1), \mathcal{G}(20n+4,0)\}) \nonumber \\
&  =mex(\{0,1,2,2\})=3. \nonumber
\end{align}
\begin{align}
\mathcal{G}(20n+5,1)&=mex(\{\mathcal{G}(16n+3,1),\mathcal{G}(16n+5,1), \mathcal{G}(20n+3,1), \mathcal{G}(20n+5,0)\}) \nonumber \\
&  =mex(\{0,1,2,2\})=3, \nonumber
\end{align}
\begin{align}
\mathcal{G}(20n+6,1)&=mex(\{\mathcal{G}(16n+4,1),\mathcal{G}(16n+6,1), \mathcal{G}(20n+4,1), \mathcal{G}(20n+6,0)\}) \nonumber \\
&  =mex(\{1,2,3,3\})=0, \nonumber
\end{align}
\begin{align}
\mathcal{G}(20n+7,1)&=mex(\{\mathcal{G}(16n+5,1),\mathcal{G}(16n+7,1), \mathcal{G}(20n+5,1), \mathcal{G}(20n+7,0)\}) \nonumber \\
&  =mex(\{1,0,3,3\})=2, \nonumber
\end{align}
\begin{align}
\mathcal{G}(20n+8,1)&=mex(\{\mathcal{G}(16n+6,1),\mathcal{G}(16n+8,1), \mathcal{G}(20n+6,1), \mathcal{G}(20n+8,0)\}) \nonumber \\
&  =mex(\{2,1,0,2\})=3, \nonumber
\end{align}
\end{proof}

\begin{theorem}
From $x=12n+9$, the sequence of Grundy numbers enters into a loop, i.e., for  $t \in \mathbb{Z}_{\ge 0}$ it
satisfies the equation 
\begin{equation}
\{\mathcal{G}(x,1):x=12n+9+8nt,12n+10+8nt, \dots, 20n+8+8nt\}=P^{n-2}QRQ^{n-2}PS. \label{12n912n10} 
\end{equation}
\end{theorem}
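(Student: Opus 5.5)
The plan is to prove the statement by induction on $t$, using periodicity of length $8n$ in both layers. The base case $t=0$ is exactly Theorem \ref{periodicth}. Assume now that (\ref{12n912n10}) holds for all $t'<t$ with $t\ge 1$, and take $x$ in the block $12n+9+8nt,\dots,20n+8+8nt$.

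\textbf{The $p=0$ layer.} By Theorem \ref{lemmafor4mcase}, $\mathcal{G}(x,0)$ depends only on $x \bmod 8n$. Hence $\mathcal{G}(x,0)=\mathcal{G}(x-8n,0)$ for all $x\ge 8n$.

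\textbf{The $p=1$ layer: setting up the comparison.} I will show that $\mathcal{G}(x,1)=\mathcal{G}(x-8n,1)$ for every $x\ge 20n+9$. Proceed by strong induction on $x$. Write
\[
\mathcal{G}(x,1)=mex(\{\mathcal{G}(x-2,1),\mathcal{G}(x-4n,1),\mathcal{G}(x-4n-2,1),\mathcal{G}(x,0)\}).
\]
The corresponding expression for $x-8n$ uses $x-8n-2$, $x-12n$, $x-12n-2$ and $\mathcal{G}(x-8n,0)$. The pass term agrees by the previous paragraph. The term $\mathcal{G}(x-2,1)$ agrees with $\mathcal{G}(x-8n-2,1)$ by the induction hypothesis, since $x-2\ge 20n+7$; for $x-2<20n+9$, the agreement $\mathcal{G}(y,1)=\mathcal{G}(y-8n,1)$ is still needed and is discussed next.

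\textbf{The main obstacle: the starting range.} The real issue is that the two subtractions of size $4n$ and $4n+2$ reach back as far as $x-12n-2\ge 8n+7$ when $x\ge 20n+9$. So I need $\mathcal{G}(y,1)=\mathcal{G}(y-8n,1)$ for all $y\ge 12n+9$, not merely $y\ge 20n+9$. This fails on $y\in[12n+9,16n+8]$: compare the tables of Theorem \ref{periodicth} with the irregular block $F B^{n-2} G$, for example $\mathcal{G}(12n+8,1)=4$. Plain shift-induction therefore does not close.

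\textbf{The fix: drop the pass term.} Instead, use Equation (\ref{calgrundy2}). For $x\ge 12n+9$ the pass option is irrelevant, because $\mathcal{G}(x,0)$ always already appears among the three subtraction values or exceeds the mex. I would verify this directly from the proposed pattern. For $x\ge 20n+9$ the recursion then involves only positions $\ge 8n+7$, i.e. only values $\mathcal{G}(\cdot,1)$ on $[8n+7,\,x-2]$. For $t=1$, the needed values lie in $[8n+7,\,20n+8]$, known from Theorem 3 and Theorem \ref{periodicth}. I then compute block by block as in the proof of Theorem \ref{periodicth}, with Cases for $P^{n-2}$, $Q$, $R$, $Q^{n-2}$, $P$, $S$. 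Each mex is the same as for $t=0$ except near offsets where $12n+8$ (value 4) was used, i.e. (\ref{first4}), (\ref{second4}), (\ref{third4}). There the old argument $4$ is replaced by $\mathcal{G}(20n+8,1)=3$, and I will check that each mex is unchanged. For instance, in (\ref{first4}) the set becomes $\{1,0,3\}$, whose mex is still 2.

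\textbf{The general step.} For $t\ge 2$, every argument lies in blocks already shown to be periodic, so $\mathcal{G}(x,1)=\mathcal{G}(x-8n,1)$ follows by a pure shift from the $t-1$ case. The pass term is absorbed either by Equation (\ref{calgrundy2}) or by the $8n$-periodicity of $\mathcal{G}(\cdot,0)$.

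The hardest step is the $t=1$ transition, together with justifying that the pass option can be ignored there. I would handle it by listing the four-element mex sets and checking that replacing 4 by 3, or by the new values from the previous block, never changes a mex.
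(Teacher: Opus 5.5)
Your proposal is correct and follows essentially the paper's route: the only input that differs between the $t=0$ and $t=1$ computations is $\mathcal{G}(12n+8,1)=4$ versus $\mathcal{G}(20n+8,1)=3$, used exactly in (\ref{first4}), (\ref{second4}), (\ref{third4}), where the mex does not change, and from $t=2$ on the blocks repeat by a pure shift. The paper makes your claim ``each mex is the same as for $t=0$'' precise by comparing the windows $x=8n+7,\dots,12n+8$ and $x=16n+7,\dots,20n+8$ directly; what you actually need is $\mathcal{G}(y,1)=\mathcal{G}(y-8n,1)$ for $16n+7\le y\le 20n+8$, which fails only at $y=20n+8$ (not on $[12n+9,16n+8]$ as you wrote), and dropping the pass term is unnecessary since $\mathcal{G}(\cdot,0)$ is already $8n$-periodic.
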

\begin{proof}
The last $4n+2$ numbers of $AB^{n-1}CDE^{n-2}FB^{n-2}G$ are
\begin{align}
& \{\mathcal{G}(x,1):x=8n+7, \dots, 12n+8\} \nonumber \\
& =\{0\} \cup \{1,1,0,0,\}^{n-2} \cup \{1,3,2,2,3,3,0,2,4\}, \label{abn1cde}
\end{align}
 and 
the last $4n+2$ numbers of $P^{n-2}QRQ^{n-2}PS$ are
\begin{align}
& \{\mathcal{G}(x,1):x=16n+7,16n+8, \dots, 20n+8\}\nonumber \\
& =\{0,1\} \cup \{1,0,0,1\}^{n-2} \cup \{3,2,2,3\} \cup \{3,0,2,3\}. \label{pn2qrq} 
\end{align}
Since $8n$ is the length of the loop for $\{\mathcal{G}(x,0):x=0,1,2, \dots\}$, 
\begin{equation}
\{\mathcal{G}(x,0):x=8n+7, \dots, 12n+8\}=\{\mathcal{G}(x,0):x=16n+7, \dots, 20n+8\}. \label{samefornopas}  
\end{equation}

In Equations (\ref{pn2qrq}) and (\ref{abn1cde}), the only difference is 
\begin{equation}
\mathcal{G}(12n+8,1)=4 \text{ and } \mathcal{G}(20n+8,1)=3.  \label{dif34} 
\end{equation}
In Equations (\ref{first4}), (\ref{second4}), and (\ref{third4}), 
we use $mex(\{1,0,4,3\})=2$, $mex(\{0,4,2,0\})=1$, and $mex(\{4,2,1,1\})=0$, and
if we use $3$ instead of $4$, we obtain 
$mex(\{1,0,3,3\})=2$, $mex(\{0,3,2,0\})=1$, and $mex(\{3,2,1,1\})=0$.
Therefore, by Equations (\ref{abn1cde}), (\ref{pn2qrq}), (\ref{samefornopas}), and (\ref{dif34}), we obtain Equation (\ref{12n912n10}).
\end{proof}

When  $x\geq 12n+9$, we have the reverse-$mex$ property of Grundy numbers with a pass.

\begin{theorem}
When  $x\geq 12n+9$, the sequence $\mathcal{G}(x,1)$ satisfies the following equation.
\begin{equation}
\mathcal{G}(x,1)=mex(\{\mathcal{G}(x+4n+2,1),\mathcal{G}(x+4n,1),\mathcal{G}(x+2,1)\}). \label{inversemex}
\end{equation}
\end{theorem}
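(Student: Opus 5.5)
We reduce the statement to a finite check on one period, using the periodicity theorem just proved. By that theorem, for every $t \in \mathbb{Z}_{\ge 0}$ the block $\{\mathcal{G}(x,1): x=12n+9+8nt,\dots,20n+8+8nt\}$ equals $P^{n-2}QRQ^{n-2}PS$. Hence $\mathcal{G}(x+8n,1)=\mathcal{G}(x,1)$ for all $x\ge 12n+9$. Since the shifts $2,4n,4n+2$ are all smaller than $8n$, both sides of Equation (\ref{inversemex}) are periodic in $x$ with period $8n$ on $x \ge 12n+9$. So it suffices to verify the identity for $x=12n+9,\dots,20n+8$, using the explicit block $P^{n-2}QRQ^{n-2}PS$ (and its next copy) to read off $\mathcal{G}(x+2,1)$, $\mathcal{G}(x+4n,1)$ and $\mathcal{G}(x+4n+2,1)$.

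Write $x=12n+8+4j+r$ with $r\in\{1,2,3,4\}$ and $0\le j\le 2n-1$, so that the $j$-th group of four entries of the pattern is $P$ for $j\le n-3$, then $Q$, $R$, $Q$ (for $n-2$ groups), $P$, $S$. The pattern is essentially antiperiodic under the shift $4n$: a $P$-group at position $j$ is matched with a $Q$-group at position $j+n$, and conversely. The exceptions are the special groups $R$ (at $j=n-2$) and $S$ (at $j=2n-1$), and the boundary groups adjacent to them.

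\textbf{Generic groups.} For groups $j$ in the interior of the $P$-run, the shift by $2$ stays within $P$ or moves into the next $P$. The shifts by $4n$ and $4n+2$ land in the $Q$-run. With $P=\{3,2,2,3\}$ and $Q=\{1,0,0,1\}$ one checks, for each $r$:
\begin{itemize}
\item $r=1$: the entries are $(2,1,0)$, giving mex $3$;
\item $r=2$: the entries are $(3,0,1)$, giving mex $2$;
\end{itemize}
and similarly for $r=3,4$. The $Q$-interior is symmetric: its shifted entries land in $P$ (the next period's $P$-run begins after $S$), and one obtains mex $1,0,0,1$. This mirrors Cases 1--4 of Theorem \ref{inversemexth}, with the roles of the values $\{0,1\}$ and $\{2,3\}$ interchanged.

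\textbf{Boundary groups.} The main work is the finitely many groups near $j=n-3,n-2,n-1$ and $j=2n-2,2n-1$, plus the wrap-around into the next period. Here the shifts hit $R=\{1,2,0,1\}$ or $S=\{3,0,2,3\}$, and the generic argument does not apply. These groups are checked individually, about a dozen positions. For example, take $x=16n+6$, where the value is $2$ from $R$. Then $\mathcal{G}(x+2,1)=\mathcal{G}(16n+8,1)=1$, $\mathcal{G}(x+4n,1)=\mathcal{G}(20n+6,1)=0$, and $\mathcal{G}(x+4n+2,1)=\mathcal{G}(20n+8,1)=3$. The mex of $\{1,0,3\}$ is $2$, as required.

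This bookkeeping is the main obstacle. One must carefully index which group each shifted position falls in, taking care that $x+4n+2$ may cross into the next period, where the pattern restarts with $P$. The hypothesis $n\ge 3$ is needed so that the runs $P^{n-2}$ and $Q^{n-2}$ are nonempty and the boundary cases do not interact.
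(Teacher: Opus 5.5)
Your proposal follows the paper's proof: reduce via the period-$8n$ theorem to $x=12n+9,\dots,20n+8$, treat the interiors of the $P$- and $Q$-runs uniformly (the paper's Cases 1 and 4), and check the boundary groups by hand (Cases 2, 3 and 5, i.e.\ your groups $j=n-3$; $j=n-2,n-1$; and $j=2n-3,2n-2,2n-1$), all of which go through. Two small slips to fix: $R$ is the group $j=n-1$ (as your example $x=16n+6$ shows), not $j=n-2$; and the last $Q$-group $j=2n-3$ belongs on your hand-checked list, because for $r=3,4$ the shift by $2$ lands in $P$ and the shift by $4n+2$ lands in $Q$, contrary to your generic description (the set of values is unchanged, so the identity still holds).
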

\begin{proof}
\noindent {\tt Case 1}: We prove Equation (\ref{inversemex}) for $x=12n+4m+t$ for $2 \leq m \leq n-2$ and $t=1,2,3,4$. 
Here, $12n+9 \leq x \leq 16n-4$, $16n+9 \leq 16n+4m+1 < 16n+4m+6 \leq 16n+14$, and 
$12n+11 \leq 12n+4m+3 < 12n+4m+6 \leq 12n+14$.
\begin{align}
\mathcal{G}(12n+4m+1,1)& =mex(\{\mathcal{G}(16n+4m+3,1),\mathcal{G}(16n+4m+1,1), \nonumber \\
& \ \ \ \ \ \mathcal{G}(12n+4m+3,1))\}) = mex(\{0,1,2\})=3,\nonumber 
\end{align}
\begin{align}
\mathcal{G}(12n+4m+2,1)& =mex(\{\mathcal{G}(16n+4m+4,1),\mathcal{G}(16n+4m+2,1), \nonumber \\
& \ \ \ \ \ \mathcal{G}(12n+4m+4,1))\}) = mex(\{1,0,3\})=2,\nonumber 
\end{align}
\begin{align}
\mathcal{G}(12n+4m+3,1)& =mex(\{\mathcal{G}(16n+4m+5,1),\mathcal{G}(16n+4m+3,1), \nonumber \\
& \ \ \ \ \ \mathcal{G}(12n+4m+5,1))\}) = mex(\{1,0,3\})=2,\nonumber 
\end{align}
\begin{align}
\mathcal{G}(12n+4m+4,1)& =mex(\{\mathcal{G}(16n+4m+6,1),\mathcal{G}(16n+4m+4,1), \nonumber \\
& \ \ \ \ \ \mathcal{G}(12n+4m+6,1))\}) = mex(\{0,1,2\})=3,\nonumber 
\end{align}
\noindent {\tt Case 2}: We prove Equation (\ref{inversemex}) for $x=16n-3 \dots, 16n$. 
\begin{align}
\mathcal{G}(16n-3,1)&=mex(\{\mathcal{G}(20n-1,1),\mathcal{G}(20n-3,1), \mathcal{G}(16n-1,1)\}) \nonumber \\
&  =mex(\{0,1,2\})=3, \nonumber \\
\mathcal{G}(16n-2,1)&=mex(\{\mathcal{G}(20n,1),\mathcal{G}(20n-2,1), \mathcal{G}(16n,1)\}) \nonumber \\
&  =mex(\{1,0,3\})=2, \nonumber
\end{align}
\begin{align}
\mathcal{G}(16n-1,1)&=mex(\{\mathcal{G}(20n+1,1),\mathcal{G}(20n-1,1), \mathcal{G}(16n+1,1)\}) \nonumber \\
&  =mex(\{3,0,1\})=2, \nonumber \\
\mathcal{G}(16n,1)&=mex(\{\mathcal{G}(20n+2,1),\mathcal{G}(20n,1), \mathcal{G}(16n+2,1)\}) \nonumber \\
&  =mex(\{2,1,0\})=3. \nonumber
\end{align}
\noindent {\tt Case 3}: We prove Equation (\ref{inversemex}) for $x=16n+1 \dots, 16n+8$. 
\begin{align}
\mathcal{G}(16n+1,1)&=mex(\{\mathcal{G}(20n+3,1),\mathcal{G}(20n+1,1), \mathcal{G}(16n+3,1)\}) \nonumber \\
&  =mex(\{2,3,0\})=1, \nonumber
\end{align}
\begin{align}
\mathcal{G}(16n+2,1)&=mex(\{\mathcal{G}(20n+4,1),\mathcal{G}(20n+2,1), \mathcal{G}(16n+4,1)\}) \nonumber \\
&  =mex(\{3,2,1\})=0, \nonumber
\end{align}
\begin{align}
\mathcal{G}(16n+3,1)&=mex(\{\mathcal{G}(20n+5,1),\mathcal{G}(20n+3,1), \mathcal{G}(16n+5,1)\}) \nonumber \\
&  =mex(\{3,2,1\})=0, \nonumber
\end{align}
\begin{align}
\mathcal{G}(16n+4,1)&=mex(\{\mathcal{G}(20n+6,1),\mathcal{G}(20n+4,1), \mathcal{G}(16n+6,1)\}) \nonumber \\
&  =mex(\{0,3,2\})=1, \nonumber
\end{align}
\begin{align}
\mathcal{G}(16n+5,1)&=mex(\{\mathcal{G}(20n+7,1),\mathcal{G}(20n+5,1), \mathcal{G}(16n+7,1)\}) \nonumber \\
&  =mex(\{2,3,0\})=1, \nonumber
\end{align}
\begin{align}
\mathcal{G}(16n+6,1)&=mex(\{\mathcal{G}(20n+8,1),\mathcal{G}(20n+6,1), \mathcal{G}(16n+8,1)\}) \nonumber \\
&  =mex(\{3,0,1\})=2. \nonumber
\end{align}
Since $P^{n-2}QRQ^{n-2}PS$ is a loop, $\mathcal{G}(20n+9,1)=\mathcal{G}(12n+9,1)=3$. Hence,
\begin{align}
\mathcal{G}(16n+7,1)&=mex(\{\mathcal{G}(20n+9,1),\mathcal{G}(20n+7,1), \mathcal{G}(16n+9,1)\}) \nonumber \\
&  =mex(\{3,2,1\})=0. \nonumber
\end{align}
Similarly, $\mathcal{G}(20n+10,1)=\mathcal{G}(12n+10,1)=2$. Hence, 
\begin{align}
\mathcal{G}(16n+8,1)&=mex(\{\mathcal{G}(20n+10,1),\mathcal{G}(20n+8,1), \mathcal{G}(16n+10,1)\}) \nonumber \\
&  =mex(\{2,3,0\})=1, \nonumber
\end{align}
\noindent {\tt Case 4}: We prove Equation (\ref{inversemex}) for $x=16n+4m+t$ for $2 \leq m \leq n-2$ and $t=1,2,3,4$. Here, $16n+9 \leq x \leq 20n-4$, $12n+9 \leq 12n+4m+1 $
$< 12n+4m+6$ $\leq 16n-4$, and $16n+11 \leq $ $16n+4m+3 < 16n+4m+6$ $\leq 20n-2$.

Note that for $x \geq 20n+9$, $\mathcal{G}(x,1)=\mathcal{G}(x-8n,1)$.
\begin{align}
\mathcal{G}(16n+4m+1,1)& =mex(\{\mathcal{G}(20n+4m+3,1),\mathcal{G}(20n+4m+1,1), \nonumber \\
& \ \ \ \ \ \mathcal{G}(16n+4m+3,1))\}) \nonumber \\
& =mex(\{\mathcal{G}(12n+4m+3,1),\mathcal{G}(12n+4m+1,1), \nonumber \\
& \ \ \ \ \ \mathcal{G}(16n+4m+3,1))\}) \nonumber \\
& = mex(\{2,3,0\})=1,\nonumber 
\end{align}
\begin{align}
\mathcal{G}(16n+4m+2,1)& =mex(\{\mathcal{G}(20n+4m+4,1),\mathcal{G}(20n+4m+2,1), \nonumber \\
& \ \ \ \ \ \mathcal{G}(16n+4m+4,1))\}) \nonumber \\
& =mex(\{\mathcal{G}(12n+4m+4,1),\mathcal{G}(12n+4m+2,1), \nonumber \\
& \ \ \ \ \ \mathcal{G}(16n+4m+4,1))\}) \nonumber \\
&= mex(\{3,2,1\})=0,\nonumber 
\end{align}
\begin{align}
\mathcal{G}(16n+4m+3,1)& =mex(\{\mathcal{G}(20n+4m+5,1),\mathcal{G}(20n+4m+3,1), \nonumber \\
& \ \ \ \ \ \mathcal{G}(16n+4m+5,1))\}) \nonumber \\
& =mex(\{\mathcal{G}(12n+4m+5,1),\mathcal{G}(12n+4m+3,1), \nonumber \\
& \ \ \ \ \ \mathcal{G}(16n+4m+5,1))\}) \nonumber \\
&= mex(\{3,2,1\})=0,\nonumber 
\end{align}
\begin{align}
\mathcal{G}(16n+4m+4,1)& =mex(\{\mathcal{G}(20n+4m+6,1),\mathcal{G}(20n+4m+4,1), \nonumber \\
& \ \ \ \ \ \mathcal{G}(16n+4m+6,1))\}) \nonumber \\
& =mex(\{\mathcal{G}(12n+4m+6,1),\mathcal{G}(12n+4m+4,1), \nonumber \\
& \ \ \ \ \ \mathcal{G}(16n+4m+6,1))\}) \nonumber \\
&= mex(\{2,3,0\})=1.\nonumber 
\end{align}

\noindent {\tt Case 5}: We prove Equation (\ref{inversemex}) for $x=20n-3 \dots, 20n+8$. Note that for $x \geq 20n+9$, $\mathcal{G}(x,1)=\mathcal{G}(x-8n,1)$.
\begin{align}
\mathcal{G}(20n-3,1)&=mex(\{\mathcal{G}(24n-1,1),\mathcal{G}(24n-3,1), \mathcal{G}(20n-1,1)\}) \nonumber \\
&=mex(\{\mathcal{G}(16n-1,1),\mathcal{G}(16n-3,1), \mathcal{G}(20n-1,1)\}) \nonumber \\
&  =mex(\{2,3,0\})=1, \nonumber \\
\mathcal{G}(20n-2,1)&=mex(\{\mathcal{G}(24n,1),\mathcal{G}(24n-2,1), \mathcal{G}(20n,1)\}) \nonumber \\
&=mex(\{\mathcal{G}(16n,1),\mathcal{G}(16n-2,1), \mathcal{G}(20n,1)\}) \nonumber \\
&  =mex(\{3,2,1\})=0, \nonumber
\end{align}
\begin{align}
\mathcal{G}(20n-1,1)&=mex(\{\mathcal{G}(24n+1,1),\mathcal{G}(24n-1,1), \mathcal{G}(20n+1,1)\}) \nonumber \\
&=mex(\{\mathcal{G}(16n+1,1),\mathcal{G}(16n-1,1), \mathcal{G}(20n+1,1)\}) \nonumber \\
&  =mex(\{1,2,3\})=0, \nonumber \\
\mathcal{G}(20n,1)&=mex(\{\mathcal{G}(24n+2,1),\mathcal{G}(24n,1), \mathcal{G}(20n+2,1)\}) \nonumber \\
&=mex(\{\mathcal{G}(16n+2,1),\mathcal{G}(16n,1), \mathcal{G}(20n+2,1)\}) \nonumber \\
&  =mex(\{0,3,2\})=1. \nonumber
\end{align}
\begin{align}
\mathcal{G}(20n+1,1)&=mex(\{\mathcal{G}(24n+3,1),\mathcal{G}(24n+1,1), \mathcal{G}(20n+3,1)\}) \nonumber \\
&=mex(\{\mathcal{G}(16n+3,1),\mathcal{G}(16n+1,1), \mathcal{G}(20n+3,1)\}) \nonumber \\
&  =mex(\{0,1,2\})=3, \nonumber
\end{align}
\begin{align}
\mathcal{G}(20n+2,1)&=mex(\{\mathcal{G}(24n+4,1),\mathcal{G}(24n+2,1), \mathcal{G}(20n+4,1)\}) \nonumber \\
&=mex(\{\mathcal{G}(16n+4,1),\mathcal{G}(16n+2,1), \mathcal{G}(20n+4,1)\}) \nonumber \\
&  =mex(\{1,0,3\})=2, \nonumber
\end{align}
\begin{align}
\mathcal{G}(20n+3,1)&=mex(\{\mathcal{G}(24n+5,1),\mathcal{G}(24n+3,1), \mathcal{G}(20n+5,1)\}) \nonumber \\
&=mex(\{\mathcal{G}(16n+5,1),\mathcal{G}(16n+3,1), \mathcal{G}(20n+5,1)\}) \nonumber \\
&  =mex(\{1,0,3\})=2, \nonumber
\end{align}
\begin{align}
\mathcal{G}(20n+4,1)&=mex(\{\mathcal{G}(24n+6,1),\mathcal{G}(24n+4,1), \mathcal{G}(20n+6,1)\}) \nonumber \\
&=mex(\{\mathcal{G}(16n+6,1),\mathcal{G}(16n+4,1), \mathcal{G}(20n+6,1)\}) \nonumber \\
&  =mex(\{2,1,0\})=3, \nonumber
\end{align}
\begin{align}
\mathcal{G}(20n+5,1)&=mex(\{\mathcal{G}(24n+7,1),\mathcal{G}(24n+5,1), \mathcal{G}(20n+7,1)\}) \nonumber \\
&=mex(\{\mathcal{G}(16n+7,1),\mathcal{G}(16n+5,1), \mathcal{G}(20n+7,1)\}) \nonumber \\
&  =mex(\{0,1,2\})=3, \nonumber
\end{align}
\begin{align}
\mathcal{G}(20n+6,1)&=mex(\{\mathcal{G}(24n+8,1),\mathcal{G}(24n+6,1), \mathcal{G}(20n+8,1)\}) \nonumber \\
&=mex(\{\mathcal{G}(16n+8,1),\mathcal{G}(16n+6,1), \mathcal{G}(20n+8,1)\}) \nonumber \\
&  =mex(\{1,2,3\})=0, \nonumber
\end{align}
\begin{align}
\mathcal{G}(20n+7,1)&=mex(\{\mathcal{G}(24n+9,1),\mathcal{G}(24n+7,1), \mathcal{G}(20n+9,1)\}) \nonumber \\
&=mex(\{\mathcal{G}(16n+9,1),\mathcal{G}(16n+7,1), \mathcal{G}(12n+9,1)\}) \nonumber \\
&  =mex(\{1,0,3\})=2, \nonumber \\
\mathcal{G}(20n+8,1)&=mex(\{\mathcal{G}(24n+10,1),\mathcal{G}(24n+8,1), \mathcal{G}(20n+10,1)\}) \nonumber \\
&=mex(\{\mathcal{G}(16n+10,1),\mathcal{G}(16n+8,1), \mathcal{G}(12n+10,1)\}) \nonumber \\
&  =mex(\{0,1,2\})=3, \nonumber
\end{align}
\end{proof}

\section{Conjectures}\label{conject}
In this section, we present some important conjectures that seem difficult to prove. The authors formulated these conjectures based on computer calculations.

\subsection{Prediction on the Reverse-Mex Property}
In this subsection, we present a prediction on the sufficient condition that a subtraction Nim has the reverse-$mex$ property.

We consider a subtraction Nim with the subtraction set $S=\{s_1, s_2, s_3\}$ satisfying $s_1<s_2 < s_3,$ where $ s_1, s_2, s_3 \in \mathbb{N}$. We suppose that there exist $p, q \in  \mathbb{N}$ such that $\mathcal{G}(x+p)=\mathcal{G}(x)$ for any $x \geq q+1$, i.e., the sequence of Grundy numbers $\mathcal{G}(x)$ has a loop whose length is $p$ for any $x \geq q+1$.
\begin{definition}
For $w \in \mathbb{Z}_{\geq0}$ and $x \in  \mathbb{N}$, let
\begin{equation} 
w- x \mod p =
\begin{cases}
w-x & (\mbox{ if } w-x \geq q+1 ),\\ \nonumber
w-x+np & (\mbox{ if } q+1 \leq w-x+np \leq q+p \mbox{ for some } n \in \mathbb{N}).\nonumber
\end{cases}
\end{equation} 
\end{definition}

\begin{definition}
For a $\mathcal{P}$-position $(w)$ , let
\begin{equation}
dist(w)= \min(\{k:(w-ks_1) \mod p \text{ is a } \mathcal{P}\text{-position} \}).
\end{equation}
\end{definition}

\begin{conjecture}\label{conditiona}
A subtraction Nim has the reverse-$mex$ property if and only if 
this Nim satisfies the following $(a)$.\\
$(a)$  For any $\mathcal{P}$-position $(w)$ such that  $dist(w)=2m$ for $m  \in \mathbb{N}$ with $m \geq 2$, we obtain
$w+s_3-s_1 \mod p$ is not a 
$\mathcal{P}$-position and  $w+s_3-3s_1 \mod p$ is a 
$\mathcal{P}$-position.
\end{conjecture}

\begin{conjecture}
If $s_3=s_1+s_2$, then the subtraction Nim satisfies the condition $(a)$ in Conjecture \ref{conditiona}. 
\end{conjecture}

 \subsection{Prediction on Reverse-$Mex$ Property for Subtraction Nim with a Pass}
 Here, we present examples of subtraction Nim with subtraction set  $\{s_1,s_2,s_3\}$ 
 that satisfy the following (i), (ii) and (iii). We allow a pass.\\
 (i) The sequence of Grundy numbers has the reverse-$mex$ property, i.e.,
\begin{equation}
\mathcal{G}(x,0)=mex(\{\mathcal{G}(x+s_1,0), \mathcal{G}(x+s_2,0), \mathcal{G}(x+s_3,0)\}).\label{conjecture1}
\end{equation}
(ii)  The Grundy number $\mathcal{G}(x,1)$ with a pass can be calculated without considering the value of the Grundy number $\mathcal{G}(x,0)$ without a pass, i.e., 
\begin{equation}
\mathcal{G}(x,1)=mex(\{\mathcal{G}(x-s_1,1), \mathcal{G}(x-s_2,1), \mathcal{G}(x-s_3,1)\}).\label{conjecture2}   
\end{equation}
(iii) The sequence of Grundy numbers $\mathcal{G}(x,1)$ has the reverse-$mex$ property, i.e.,
\begin{equation}
\mathcal{G}(x)=mex(\{\mathcal{G}(x+s_1,1), \mathcal{G}(x+s_2,1), \mathcal{G}(x+s_3,1)\}).\label{conjecture3}    
\end{equation}

\begin{conjecture}
$(a)$ For a subtraction Nim with a subtraction set $\{s_1,s_2,s_3\}$  $=\{a,2an,2an+a\}$, where $a, n \in \mathbb{N}$, we obtain 
(\ref{conjecture1}), and for $x \geq 6an+(4a+1) $, we have $(\ref{conjecture2})$ and $(\ref{conjecture3})$. 
The sequence of Grundy numbers enters into a loop when $x \geq 6an+(4a+1)$.\\
$(b)$ For a subtraction Nim with a subtraction set 
 $\{s_1,s_2,s_3\}$ $=\{a,(2n+1)a,(2n+3)a\}$, where $a, n \in \mathbb{N}$, we obtain 
(\ref{conjecture1}), and for $x \geq a(2n+3)+1$, we have $(\ref{conjecture2})$ and $(\ref{conjecture3})$. 
In this case, for any $x$, 
$\mathcal{G}(x,1)<4$. The sequence of Grundy numbers enters into a loop when $x \geq a(2n+3)+1$.\\
(iii) For a subtraction Nim with a subtraction set 
 $\{s_1,s_2,s_3\}$ $=\{a,(2n+1)a,(2n+5)a\}$, where $a, n \in \mathbb{N}$, 
 we obtain Equation $(\ref{conjecture1})$.
 In this case, for any $x$, $\mathcal{G}(x,1)<4$. 
For $x \geq a(2n+5)+1$, we obtain Equations $(\ref{conjecture2})$ and $(\ref{conjecture3})$.
In this case, for any $x$, the sequence of Grundy numbers enters into a loop when $x \geq a(2n+5)+1$.
\end{conjecture}

\end{document}